\documentclass[11pt]{amsart}

\usepackage{ytableau}

\usepackage{amsmath, amsxtra, amsthm, amssymb,mathtools,bm,amsfonts}
\usepackage{mathrsfs}
\usepackage[normalem]{ulem}
\usepackage[mathscr]{euscript}
\usepackage{graphicx}
\usepackage{url}
\usepackage{color}
\usepackage{bbm}
\usepackage{tikz-cd}
\usepackage{tikz}

\usepackage[T1]{fontenc}
\graphicspath{/Figures/}

\usepackage{enumitem}
\usepackage{comment}
\usepackage[pdftex,hidelinks,backref=page]{hyperref}
\hypersetup{
    colorlinks,
    citecolor=magenta,
    filecolor=magenta,
    linkcolor=blue,
    urlcolor=black
}
\usepackage{cleveref}

\usepackage{xcolor}

\renewcommand{\emptyset}{\varnothing}
\newcommand{\NN}{\mathbb N}

\newcommand{\ZZ}{\mathbb Z}

\theoremstyle{definition}
\newtheorem{thm}{Theorem}[section]
\newtheorem{cor}[thm]{Corollary}
\newtheorem{lem}[thm]{Lemma}

\newtheorem{prop}[thm]{Proposition}
\newtheorem{defn}[thm]{Definition}

\newtheorem{eg}[thm]{Example}
\newtheorem{rem}[thm]{Remark}

\newtheorem{fact}[thm]{Fact}

\numberwithin{equation}{section}
\newcommand{\forestpoly}[1]{\mathfrak{P}_{#1}} 
\newcommand{\grovepoly}[1]{\widetilde{\mathfrak{P}}_{#1}} 
\newcommand{\grovepolyb}[1]{\widetilde{\mathfrak{P}}_{#1}^{(\beta)}} 

\newcommand{\qsym}[1]{\operatorname{QSym}_{#1}} 

\newcommand{\internal}[1]{\operatorname{IN}(#1)} 
\newcommand{\suchthat}{\;|\;}

\newcommand{\poly}{\operatorname{Pol}} 
\newcommand{\schub}[1]{\mathfrak{S}_{#1}} 
\newcommand{\groth}[1]{\mathfrak{G}_{#1}} 

\newcommand{\red}[1]{\operatorname{Red}(#1)} 
\newcommand{\des}[1]{\operatorname{Des}(#1)} 

\newcommand{\idem}{\operatorname{id}} 
\newcommand{\ct}{\operatorname{ev}_0} 
\newcommand{\qdes}[1]{\operatorname{LTer}(#1)} 
\newcommand{\tope}[1]{\mathsf{T}_{#1}}
\newcommand{\rope}[1]{\mathsf{R}_{#1}} 
\newcommand{\zigzag}[1]
{\mathsf{ZigZag}_{#1}}

\newcommand{\ltfor}[2][] 
{
{\ifx&#1&%
  {\mathsf{LTFor}_{#2}}
\else
  {\mathsf{LTFor}_{#2}^{#1}}
\fi}
}

\newcommand{\rtfor}[2][] 
{
{\ifx&#1&%
  {\mathsf{RTFor}_{>#2}}
\else
  {\mathsf{RTFor}_{>#2}^{#1}}
\fi}
}

\newcommand{\suppfor}[2][] 
{
{\ifx&#1&%
  {\mathsf{Forest}_{#2}}
\else
  {\mathsf{For}_{#2}^{#1}}
\fi}
}

\newcommand{\fl}[1]{\mathrm{Fl}_{#1}}

\newcommand{\qscoinv}[2][]{
{\ifx&#1&%
  {\operatorname{QSCoinv}_{#2}}
\else
  {{}^{#1}\!\operatorname{QSCoinv}_{#2}}
\fi}
}

\definecolor{ao}{rgb}{0.0, 0.5, 0.0}

\newcommand{\qfl}{\mathrm{QFl}} 

\renewcommand\emph[1]{\textcolor{blue}{\textit{#1}}} 

\newcommand{\LT}{\mathrm{LTer}}
\newcommand{\alpx}{\mathbf{x}}

\newcommand{\indfor}{\operatorname{For}}
\newcommand{\compatible}[1]{\operatorname{Comp}(#1)}
\newcommand{\kompatible}[1]{\operatorname{Komp}(#1)}
\newcommand{\compatiblesub}[2]{\operatorname{Komp}^{(#1)}(#2)}
\newcommand{\ktope}[1]{\mathsf{G}_{#1}}

\newcommand{\cpi}{\hat{\pi}}

\newcommand{\ktopeL}[1]{{\hat{\mathsf{G}}^L_{#1}}}
\newcommand{\ktopeR}[1]{{\hat{\mathsf{G}}^R_{#1}}}
\newcommand{\ktopeH}[1]{{\hat{\mathsf{G}}_{#1}}}

\newcommand{\ktopebH}[1]{{\hat{\mathsf{G}}^{(\beta)}_{#1}}}

\title[]{}

\newcommand{\xl}{{\bm{x}}}

\newcommand{\gl}[1]{\mathrm{GL}_{#1}}

\title{Grove polynomials and $K$-theoretic quasisymmetry}

\author{Philippe Nadeau}
\address{Université Lyon 1, Centrale Lyon, INSA Lyon, Université Jean Monnet, CNRS, ICJ UMR5208, 69622 Villeurbanne, France}
\email{\href{mailto:nadeau@math.univ-lyon1.fr}{nadeau@math.univ-lyon1.fr}}

\author{Hunter Spink}
\address{Department of Mathematics,
University of Toronto, Toronto, ON M5S 2E4, Canada}
\email{\href{mailto:hunter.spink@utoronto.ca}{hunter.spink@utoronto.ca}}

\author{Vasu Tewari}
\address{Department of Mathematical and Computational Sciences, University of Toronto Mississauga, Mississauga, ON L5L 1C6, Canada}
\email{\href{mailto:vasu.tewari@utoronto.ca}{vasu.tewari@utoronto.ca}}

\thanks{
PN was partially supported by French ANR grant ANR-19-CE48-0011 (COMBIN\'E). 
HS and VT acknowledge the support of the NSERC, respectively [RGPIN-2024-04181] and [RGPIN-2024-05433].}

\begin{document}

\begin{abstract}
    We define the grove polynomials, a set-valued extension of forest polynomials.  
    We show that they are $K$-theoretically dual to the quasisymmetric Schubert cells which pave the quasisymmetric flag variety, in the same way that Grothendieck polynomials are dual to Schubert cells in the complete flag variety.  
    As a consequence, the finite truncations of the multi-fundamental quasisymmetric functions of 
    Lam--Pylyavskyy acquire a geometric interpretation as $K$-theoretic representatives of quasisymmetric Schubert cells.
\end{abstract}

\maketitle

\section{Introduction}

Schubert polynomials $\schub{w}$ \cite{LS82} and Grothendieck polynomials 
$\groth{w}^{(\beta)}$ \cite{FKgroth93,LSgroth82} each solve two parallel problems. 
On the combinatorial side, they are dual to composites of the divided difference
operators $\partial_i$ and isobaric divided difference operators $\pi_i^{(\beta)}$, 
which give natural representations of the nil-Hecke and $0$-Hecke algebras respectively. 
On the geometric side, for $B\subset \gl{n}$ the subgroup of invertible upper triangular 
matrices, $\schub{w}$ and $\groth{w}^{(-1)}$ are dual to the Schubert cycles 
$X^w=\overline{BwB/B}$ and structure sheaves of Schubert cells $\mathring{X}^w=BwB/B$ 
in the Bruhat decomposition
$$\gl{n}/B=\bigsqcup_{w\in S_n}\mathring{X}^w, \qquad \mathring{X}^w\cong 
\mathbb{A}^{\ell(w)},$$
under the degree pairing in cohomology and the Euler characteristic pairing in $K$-theory 
respectively \cite{KM05}.

The forest polynomials $\forestpoly{F}$ \cite{NST_1,NT24} solve two analogous problems 
in a quasisymmetric setting. On the combinatorial side, they are dual to composites of 
quasisymmetric divided difference operators $\tope{i}$, which give natural 
representations of the Thompson monoid governing compositions of indexed forests. On the 
geometric side, the quasisymmetric flag variety $\qfl_n\subset \gl{n}/B$ 
\cite{BGNST2} admits an affine paving by quasisymmetric Schubert cells,
$$\qfl_n=\bigsqcup_{F\in \indfor_n}\mathring{X}(F), \qquad \mathring{X}(F)\cong 
\mathbb{A}^{|F|},$$
and the forest polynomials are dual to the closures $X(F)$ under the degree 
pairing. 
The variety $\qfl_n$ is obtained from $\gl{n}/B$ by setting extremal 
Pl\"ucker coordinates to zero outside the $c$-noncrossing partitions for 
$c=s_{n-1}\cdots s_1$. 
Each cell $\mathring{X}(F)$ is a distinguished coordinate 
subspace of a Schubert cell $BwB/B$ associated to a $c$-noncrossing partition $w$, 
with weights determined by a $c$-cluster.

In this paper we introduce the grove polynomials $\grovepoly{F}^{(\beta)}$, a family of polynomials defined via set-valued fillings that simultaneously generalize forest polynomials and extend the multifundamental quasisymmetric polynomials of Lam--Pylyavskyy \cite{LaPy07} to a basis of $\mathbb{Z}[x_1, \ldots, x_n]$. 
Their combinatorial properties are governed by a new family of operators $\ktope{i}$ satisfying the Thompson monoid relations, which play the same role for grove polynomials that the $\pi_i^{(\beta)}$ play for Grothendieck polynomials. 
In particular, this operator calculus yields positivity results analogous to those in the classical theory: the grove polynomials have positive multiplicative structure constants, and Grothendieck 
polynomials expand positively into grove polynomials,
$$
\groth{w}^{(\beta)}=\sum_{\ell(w)\le |F|} \beta^{|F|-\ell(w)}a_w^F\, \grovepolyb{F}\text{ with }a_w^F\ge 0.
$$

As in the classical theory, the geometrically meaningful specialization is $\beta = -1$, 
where $\groth{w}^{(-1)}$ represents the structure sheaf of a Schubert cell in $K$-theory. 
The grove polynomials play the same role for $\qfl_n$: the $\grovepoly{F}^{(-1)}$ are 
Kronecker dual to the structure sheaves $[\mathcal{O}_{\mathring{X}(F)}]\in 
K^\bullet(\qfl_n)$ under the Euler characteristic pairing, in direct analogy with the 
duality between $\groth{w}^{(-1)}$ and $[\mathcal{O}_{\mathring{X}^w}]\in 
K^\bullet(\fl{n})$ \cite{FL94}. Combined with the positivity of the Grothendieck-to-grove 
expansion, this duality implies that the coefficients $a^F_w$ govern the expansion of 
structure sheaves of quasisymmetric Schubert cells into those of classical Schubert cells,
$$[\mathcal{O}_{\mathring{X}(F)}]=\sum_{\ell(w)\le |F|} 
(-1)^{|F|-\ell(w)}a^F_w[\mathcal{O}_{\mathring{X}^w}],$$
which is the appropriate notion of $K$-theoretic positivity.
We note that other bases that refine Grothendieck polynomials, such as the glide 
polynomials \cite{PS19} and Lascoux polynomials \cite{La01,SY23}, do not currently 
have a geometric interpretation of this kind.

Finally, the fundamental quasisymmetric polynomials of Gessel \cite{Ges84} are the special case of forest polynomials indexed by \emph{zigzag trees}, and are thus natural cohomological objects from the perspective of $\qfl_n$. 
The multi-fundamental quasisymmetric polynomials of  Lam--Pylyavskyy are the grove polynomials indexed by these same trees, so our results give precise geometric meaning to the hope expressed in \cite[Introduction]{LaPy07} that the multi-fundamentals are ``$K$-theoretic analogues'' of the fundamental quasisymmetric polynomials.
$$\begin{tikzcd}\text{Forest polynomials $\forestpoly{F}$}\ar[r,dashed]&\text{Grove polynomials $\grovepoly{F}$}\\\text{Fundamental quasisymmetric polynomials $F_\alpha$}\ar[u,hook]\ar[r,dashed]&\text{Multi-fundamental polynomials $\grovepoly{\alpha}$}\ar[u,hook]\end{tikzcd}$$

\section{Background}

Throughout we let $[n]=\{1,2,\ldots,n\}$ for any nonnegative integer $n$.  
By $\delta_{a,b}$ we mean $1$ if $a=b$ and $0$ otherwise, and by $\delta_{a}$ we mean $1$ if $a$ is the identity element and $0$ otherwise. More generally, we use $\delta_P$ for any condition $P$ to mean $1$ if $P$ is satisfied and $0$ otherwise. 
We refer the reader to standard references and surveys \cite{AF24,Br05,BGP,Fulton,PS20,WY23} for further background on the geometric and combinatorial aspects of the classical objects appearing in this article.

We work in the polynomial ring $\ZZ[\xl]\coloneqq \ZZ[x_1,x_2,\ldots]$ in infinitely many variables, on which the infinite symmetric group $S_{\infty}=\langle s_1,s_2,\ldots\rangle$, with $s_i=(i,i+1)$, acts by permuting variables: $\sigma\cdot f(x_1,x_2,\ldots)=f(x_{\sigma(1)},x_{\sigma(2)},\ldots)$. For $w\in S_\infty$, the \emph{length} $\ell(w)$ is the minimal number of simple transpositions needed to express $w$; any such minimal expression $w=s_{i_1}\cdots s_{i_k}$ is called a \emph{reduced word} for $w$, and we write $i_1\cdots i_k\in\red{w}$.

\subsection{Divided differences}

We let $\beta\in \mathbb{Z}$ be a parameter. 
Our K-theoretic combinatorial notions will involve this parameter, but our combinatorial results will not depend on it, so we will often suppress it from the notation when it is not relevant. If we are working with a specific value of $\beta$ we shall state it explicitly.

\begin{defn}
The \emph{divided difference} and \emph{isobaric divided difference} operators on $\ZZ[\xl]$ are
$$\partial_i=\frac{\idem-s_i}{x_i-x_{i+1}}\qquad\text{and}\qquad\pi_i=\partial_i(1+ \beta x_{i+1})=\frac{(1+\beta x_{i+1})\idem -(1+\beta x_i)s_i}{x_i-x_{i+1}}.$$
\end{defn}
These operators satisfy the relations of the nil-Hecke and $0$-Hecke algebras respectively:
\begin{equation}\label{eq:divided_diff_relations}
\begin{alignedat}{3}
    \partial_i^2&=0\qquad &\partial_i\partial_j&=\partial_j\partial_i\text{ if }|i-j|\ge 2\qquad \partial_i\partial_{i+1}\partial_i&&=\partial_{i+1}\partial_i\partial_{i+1}\\ 
    \pi_i^2&=-\beta\pi_i\qquad &\pi_i\pi_j&=\pi_j\pi_i\text{ if }|i-j|\ge 2\qquad \pi_i\pi_{i+1}\pi_i&&=\pi_{i+1}\pi_i\pi_{i+1}
\end{alignedat}
\end{equation}
We note that taking $\beta=0$ in $\pi_i$ we obtain $\partial_i$. 
The relations in~\eqref{eq:divided_diff_relations} imply that for any $w\in S_{\infty}\coloneqq \bigcup_{n\geq 0}S_n$ there are well defined operators $\partial_w\coloneqq \partial_{i_1}\cdots \partial_{i_k}$ and $\pi_w\coloneqq \pi_{i_1}\cdots \pi_{i_k}$ for any choice of reduced word $i_1\cdots i_k \in \red{w}$.

The divided difference operators interact with two important families of polynomials. The \emph{Schubert polynomials} $\schub{w}$ and \emph{Grothendieck polynomials} $\groth{w}$ are the unique families in $\ZZ[\xl]$ satisfying $\ct\schub{w}= \ct\groth{w}=\delta_{w,\idem}$, where $\ct:\mathbb{Z}[\xl]\to \mathbb{Z}$ is the constant term map, and
$$\partial_i\schub{w}=
\begin{cases}
\schub{ws_i}&\ell(ws_i)<\ell(w)\\0&\ell(ws_i)>\ell(w)\end{cases}\qquad\qquad \pi_i\groth{w}=\begin{cases}\groth{ws_i}&\ell(ws_i)<\ell(w)\\-\beta \groth{w}&\ell(ws_i)>\ell(w).
\end{cases}$$

The Schubert polynomials are homogeneous of degree $\ell(w)$. 
Furthermore the Grothendieck polynomial $\groth{w}$ agrees with $\schub{w}$ in lowest degree. Both families are bases of $\mathbb{Z}[\xl]$, and are constructed via the ansatz $\schub{w_{0,n}}=\groth{w_{0,n}}=x_1^{n-1}\cdots x_{n-1}^1$ for the longest element $w_{0,n}\in S_n=\langle s_1,\ldots,s_{n-1}\rangle$.

Define $\cpi_i=\beta\idem+\pi_i$. The operators $-\cpi_i$ satisfy the $0$-Hecke relations, so $\cpi_w\coloneqq \cpi_{i_1}\cdots \cpi_{i_k}$ is well defined for any reduced word $i_1\cdots i_k\in \red{w}$. The two families are then dual to the $\partial_w$ and $\cpi_w$ operators in the sense that

\begin{align}
\label{eq:schubert_grothendieck_extractors}
    \ct \partial_w\schub{w}=\ct \cpi_w\groth{w}=\delta_{w,\idem}.
\end{align}

\subsection{Forest polynomials}
We now introduce the combinatorial objects and operators that will be relevant to the quasisymmetric setting drawing from our previous works \cite{NST_1, NT24}.
\begin{defn}
The \emph{Bergeron--Sottile operators} and \emph{quasisymmetric divided difference operators} \cite{NST_1} are defined on $\ZZ[\xl]$ by
\begin{align*}
    \rope{i}f &= f(x_1,\dots,x_{i-1},0,x_i,x_{i+1},\dots), \\
    \tope{i}f &= \frac{1}{x_i}\bigl(\rope{i+1}f - \rope{i}f\bigr).
\end{align*}
\end{defn}

These satisfy
$$
\rope{i}\rope{j} = \rope{j}\rope{i+1} \quad (i \ge j), \qquad
\tope{i}\tope{j} = \tope{j}\tope{i+1} \quad (i > j).
$$
As explained in \cite{NST_1}, the relations for the $\tope{i}$ are exactly the Thompson monoid relations.
In particular, the composites of these operators are best described by the combinatorics of indexed forests, which we now introduce.

An \emph{indexed forest} is a sequence $F = (T_1, T_2, \ldots)$ of binary trees with all but finitely many equal to the trivial tree $\ast$. We write $\internal{F} = \bigcup \internal{T_i}$ for the set of non-leaf nodes. The size $|F|$ is given by the cardinality $\#\internal{F}$. 

\begin{figure}[!ht]
    \centering
    \includegraphics[width=0.75\linewidth]{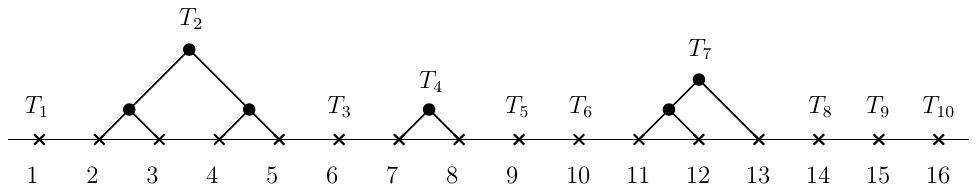}
    \caption{An indexed forest of size 6}
    \label{fig:indexed_forest_example}
\end{figure}
\

For $v\in \internal{F}$, we denote the left and right children by $v_L$ and $v_R$ respectively.
For $v\in \internal{F}\sqcup \NN$ we define $\rho(v)$ by letting it equal $\rho(v_L)$ if $v\in \internal{F}$, and otherwise setting it equal to the leaf label itself.
The roots and leaves of $F$ carry a natural ordering, and $\indfor$ is a monoid under the product $F \cdot G$ formed by attaching the $i$th leaf of $F$ to the $i$th root of $G$, with identity $\emptyset = \ast\ast\cdots$. Let $\indfor_n\subset \indfor$ be the set of indexed forests all of whose nontrivial trees have leaves in $\{1,\ldots,n\}$; the example in \Cref{fig:indexed_forest_example} belongs to $\indfor_n$ for $n\geq 13$. For $\wedge$ the size $1$ binary tree, the assignment
$$
i \mapsto \underbrace{\ast\cdots\ast}_{i-1}\wedge\ast\ast\cdots
$$
is an isomorphism between the Thompson monoid and $\indfor$. In particular, for each $F \in \indfor$ and any \emph{trimming sequence} $i_1 \cdots i_k$ of $F$, the composite $\tope{F} = \tope{i_1}\cdots\tope{i_k}$ is well defined. Under this identification, $F = G \cdot i$ if and only if the $i$th leaf is the left child of a terminal node. We write $\qdes{F} \subset \mathbb{N}$ for the set of all such $i$, and for $i \in \qdes{F}$ write $F/i$ for the unique forest satisfying $F = (F/i) \cdot i$.

\begin{defn}
    A \emph{labeling} of $F\in\indfor$ is a map $\kappa$ from $\internal{F}$ to $\NN$.
    We say that $\kappa$ is \emph{compatible} with $F$ if the following conditions hold:
    \begin{itemize}
        \item $\kappa(v)\leq  \kappa(v_L)$, and 
        \item $ \kappa(v)<  \kappa(v_R)$.
    \end{itemize}
    We define the \emph{weight} of $\kappa$ to be the monomial $\alpx_{\kappa}$ defined as $\alpx_{\kappa}=\prod_{v\in \internal{F}}x_{\kappa(v)}$.
        
\end{defn}

\begin{rem}[Conventions for leaves]
\label{rem:leaf_convention}
When a child $v_L$ or $v_R$ of an internal node is a leaf $\ell\in\NN$, we interpret its label set as the singleton $\kappa(\ell)=\ell$.
\end{rem}

\begin{defn}
\label{defn:not_set_valued}
    Let $F\in \indfor$. Let $\compatible{F}$ denote the set of compatible labelings of $F$.
    We define the forest polynomial $\forestpoly{F}$ by
    \[
        \forestpoly{F}=\sum_{\kappa \in \compatible{F}}\alpx_{\kappa}.
    \]
\end{defn}
A less combinatorial but more useful definition emphasizes their interaction with the $\tope{i}$ operators and establishes the parallel with Schubert polynomials.
The family $(\forestpoly{F})_{F\in\indfor} $ is the unique family satisfying $\ct\forestpoly{F} = \delta_{F,\emptyset}$ and
$$
\tope{i}\forestpoly{F} = \begin{cases} \forestpoly{F/i} & i \in \qdes{F}, \\ 0 & i \notin \qdes{F}. \end{cases}
$$
They are dual to the operators $\tope{F}$ in the sense that $\ct\tope{F}\forestpoly{G} = \delta_{F,G}$.

\section{Grove polynomials}

We now introduce an analogue of the $\pi_i$ operations.

\begin{defn}
 For any $i\geq 1$, let \emph{$\ktope{i}$} be the operator on $\mathbb{Z}[\xl]$ defined by $\ktope{i}=\tope{i}(1+\beta x_{i+1})$.
\end{defn}
Note the equivalent representations
$$\ktope{i}=\rope{i}\pi_i=\rope{i+1}\pi_i=\rope{i}\partial_i(1+\beta x_{i+1})=\tope{i}-\beta \rope{i}.$$

In this section we show the following theorem.
\begin{thm}
\label{thm:GroveCharacterization}
    There is a unique family of polynomials $\grovepoly{F}\in \mathbb{Z}[\xl]$ we call the \emph{grove polynomials}, which satisfies $\grovepoly{\emptyset}=1$, $\ct \grovepoly{F}=0$ if $F\neq \emptyset$, and
$$
\ktope{i}\grovepoly{F}=\begin{cases}\grovepoly{F/i}&i\in \qdes{F}\\ -\beta \rope{i}\grovepoly{F}&i\not\in \qdes{F}.\end{cases}
$$
\end{thm}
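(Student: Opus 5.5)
Uniqueness and existence will be handled separately. For uniqueness, I will induct on $|F|$ and use the fact that the operators $\ktope{i}$ together with $\ct$ detect polynomials. Concretely, suppose $f\in\mathbb{Z}[\xl]$ satisfies $\ct f=0$ and $\ktope{i}f=-\beta\rope{i}f$ for all $i$. Since $\ktope{i}=\tope{i}-\beta\rope{i}$, this says $\tope{i}f=0$ for all $i$. By the forest-polynomial theory recalled above, the $\forestpoly{G}$ form a basis and $\ct\tope{G}\forestpoly{H}=\delta_{G,H}$. Writing $f=\sum c_G\forestpoly{G}$ gives $c_G=\ct\tope{G}f$, and every such coefficient vanishes: for $G\neq\emptyset$ the last operator applied kills $f$, and for $G=\emptyset$ we have $\ct f=0$. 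Hence $f=0$.

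Now take two families $\grovepoly{F}$ and $\grovepoly{F}'$ with the stated properties, and set $d=\grovepoly{F}-\grovepoly{F}'$. By induction on $|F|$, for $i\in\qdes{F}$ we get $\ktope{i}d=0$. Since $\rope{i}$ is injective only in a weak sense, I would rewrite this case carefully. For $i\notin\qdes{F}$ we get $\ktope{i}d=-\beta\rope{i}d$, i.e. $\tope{i}d=0$. For $i\in\qdes{F}$, $\ktope{i}d=0$ means $\tope{i}d=\beta\rope{i}d$. So the argument needs something more: induct on degree via the $\beta$-expansion. Cleaner: $\tope{G}$ applied to $d$ yields a triangular system. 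I will first prove that $\grovepoly{F}$ has lowest degree part $\forestpoly{F}$ and that $\ct\,\ktopeH{G}$-type extractors (with $\hat{\mathsf G}_i=\beta+\ktope{i}$-analogues) give $\delta_{F,G}$. Then uniqueness follows from $d$ being killed by all extractors, exactly as in \eqref{eq:schubert_grothendieck_extractors}.

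For existence, the natural route is combinatorial: define $\grovepoly{F}$ as a signed/$\beta$-weighted sum over set-valued compatible labelings. Each internal node gets a nonempty set of labels, with conditions $\max\kappa(v)\le\min\kappa(v_L)$ and $\max\kappa(v)<\min\kappa(v_R)$, and weight $\beta^{\sum(|\kappa(v)|-1)}\prod x$. At $\beta=0$ this recovers $\forestpoly{F}$. One then verifies $\ktope{i}\grovepoly{F}$ directly. Since $\ktope{i}=\rope{i}\pi_i$, I would first compute $\pi_i$ on the set-valued sum by pairing labelings in the standard way for Grothendieck polynomials: the exchange involves labels $i$ and $i+1$ along a chain, and $\rope{i}$ then kills any remaining label $i$ and shifts labels above. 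The cases $i\in\qdes{F}$ and $i\notin\qdes{F}$ will split according to whether leaf $i$ is the left child of a terminal node $v$ with right child leaf $i+1$. In that case the labels of $v$ are forced into $\{\le i\}$, and after the operator $v$ is effectively contracted.

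The main obstacle I expect is this local sign-reversing involution for $\ktope{i}$, particularly handling nodes carrying both $i$ and $i+1$ in their sets. If it becomes unwieldy, the fallback is an algebraic existence proof. Pick $n$ large and an ansatz for the grove polynomials of the maximal forests in $\indfor_n$, for example via $\rope{}$-shifted Grothendieck-type products. Then define $\grovepoly{F/i}=\ktope{i}\grovepoly{F}$ and check well-definedness using the Thompson relations $\ktope{i}\ktope{j}=\ktope{j}\ktope{i+1}$ for $i>j$, which follow from $\ktope{i}=\tope{i}-\beta\rope{i}$ and the mixed relations $\rope{i}\tope{j}=\tope{j}\rope{i+1}$.
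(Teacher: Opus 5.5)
\textbf{Uniqueness.} You correctly reduce to $d=\grovepoly{F}-\grovepoly{F}'$ with $\ct d=0$, $\tope{i}d=0$ for $i\notin\qdes{F}$, and $\tope{i}d=\beta\rope{i}d$ for $i\in\qdes{F}$. You then stop and defer to an extractor duality that you never prove, and this is a genuine gap.

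That detour goes through a harder, later result. In the paper, $\ct\ktopeH{F}\grovepoly{G}=\delta_{F,G}$ (Theorem~\ref{thm:groveextractors}) needs the straightening relations of Lemma~\ref{lem:relations}. The naive analogue $\beta\,\idem+\ktope{i}$ of $\cpi_i$ that you suggest is not an extractor: for $i\notin\qdes{F}$ it sends $\grovepoly{F}$ to $\beta(\grovepoly{F}-\rope{i}\grovepoly{F})\neq0$. The right operators are $\ktope{i}+\beta\rope{i}$ and $\ktope{i}+\beta\rope{i+1}$, chosen according to whether $i$ is a right child. You would also still need a separate lemma that these functionals detect polynomials.

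None of this is necessary. Multiply your two relations by $x_i$ and use $x_i\tope{i}=\rope{i+1}-\rope{i}$. This gives $\rope{i+1}d=(1+\beta x_i)\rope{i}d$ for $i\in\qdes{F}$ and $\rope{i+1}d=\rope{i}d$ otherwise. Telescoping from $d=\rope{N}d$ ($N\gg0$) gives $d=\prod_{i\in\qdes{F}}(1+\beta x_i)\cdot\rope{1}d$. Iterating, with $\rope{1}^kd=\ct d$ for $k\gg0$, yields $d=\ct(d)\prod_{i\in\qdes{F}}\prod_{j\le i}(1+\beta x_j)=0$. Your abandoned ``induct on degree'' idea also works directly: $\rope{i}$ preserves degree while $\tope{i}$ lowers it by one, so the lowest-degree component of $d$ is killed by every $\tope{i}$, hence is constant, hence $0$.

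\textbf{Existence.} You choose the right combinatorial model, but the verification of the relation, which is the real content of the theorem, is left undone. You plan to compute $\pi_i$ through an unspecified Grothendieck-style sign-reversing involution, and you yourself flag this as the main obstacle. The fallback does not work as stated, for two reasons. First, there is no analogue of $w_{0,n}$: $\indfor_n$ has many maximal forests, whose polynomials you would have to guess compatibly. Second, the case $i\notin\qdes{F}$ is not implied by the Thompson relations, which contain no counterpart of $\pi_i^2=-\beta\pi_i$.

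The missing idea is to bypass $\pi_i$ altogether. Since $\ktope{i}=\tope{i}-\beta\rope{i}$, the claim is that $\tope{i}\grovepoly{F}=0$ for $i\notin\qdes{F}$ and $\tope{i}\grovepoly{F}=\grovepoly{F/i}+\beta\rope{i}\grovepoly{F}$ for $i\in\qdes{F}$. Now $\rope{i}\grovepoly{F}$ and $\rope{i+1}\grovepoly{F}$ are sums over compatible labelings avoiding $i$, respectively $i+1$, with larger labels shifted down. The relabeling $i+1\mapsto i$ injects the first index set into the second, preserving weights. The leftover labelings are exactly those for which some terminal node $u$ with leaf children $i,i+1$ has $\max\kappa(u)=i$. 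Split according to $\kappa(u)$:
\begin{itemize}
\item If $\kappa(u)=\{i\}$, contract $u$; this gives $\grovepoly{F/i}$.
\item If $\kappa(u)\supsetneq\{i\}$, remove $i$ from $\kappa(u)$ and relabel $i\mapsto i+1$; this gives $\beta\rope{i}\grovepoly{F}$.
\end{itemize}
This proves the relation with no cancellation or involution at all.
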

\begin{rem}
\label{rem:Riorplus1}
    We have $\ker(\ktope{i}+\beta \rope{i})=\ker(\tope{i})=\ker((1+\beta x_i)\tope{i})=\ker(\ktope{i}+\beta \rope{i+1})$ so $\rope{i+1}$ can be used instead of $\rope{i}$ in the second case above.
\end{rem}

\subsection{Definition of grove polynomials}

Our definition of grove polynomials is combinatorial. 
It is a set-valued extension of the one for forest polynomials \cite{NT24}, and is motivated by and generalizes Lam--Pylyavskyy's construction of multi-fundamental quasisymmetric polynomials \cite{LaPy07}.

\begin{defn}\label{def:kompatible}
    A \emph{set-valued labeling} of $F\in\indfor$ is a map $\kappa$ from $\internal{F}$ to nonempty finite subsets of $\NN$.
    We say that $\kappa$ is compatible with $F$ the following conditions hold:
    \begin{itemize}
        \item $\max \kappa(v)\leq \min \kappa(v_L)$, and 
        \item $\max \kappa(v)<  \min \kappa(v_R)$.
    \end{itemize}
    We define $|\kappa|=\sum_{v\in \internal{F}}|\kappa(v)|$, and define the \emph{weight} of $\kappa$ to be the monomial $\alpx_{\kappa}$ defined as 
    \[
        \alpx_{\kappa}=\prod_{v\in \internal{F}}\prod_{j\in \kappa(v)}x_j.
    \]
\end{defn}

Figure~\ref{fig:kompatible_filling} shows an example of a set-valued filling $\kappa$ compatible with the underlying indexed forest $F$.
\begin{figure}[!ht]
    \centering
    \includegraphics[scale=1]{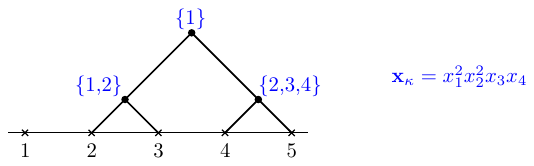}
    \caption{A set-valued labeling compatible with $F$ with weight $x_1^2x_2^2x_3x_4$.}
    \label{fig:kompatible_filling}
\end{figure}

With Definition~\ref{def:kompatible} in hand we are ready to define grove polynomials.

\begin{defn}
\label{defn:setvalued}
    Let $F\in \indfor$. 
    Let $\kompatible{F}$ denote the set of compatible set-valued labelings of $F$.
    We define the grove polynomial $\grovepoly{F}$ by
    \[
        \grovepoly{F}=\sum_{\kappa \in \kompatible{F}}\beta^{|\kappa|-|F|}\alpx_{\kappa}.
    \]
\end{defn}
Note that by definition only nonnegative powers of $\beta$ occur.

\begin{eg}
\label{eg:grove_polynomial}
    Let $F\in \indfor$ be the indexed forest whose sole nontrivial constituent is the tree $T_2$ in Figure~\ref{fig:indexed_forest_example}. Then $\grovepoly{F}$ equals
    \begin{multline*}
         \beta^3 x_1^{2} x_2^{2} x_3 x_4 
         + \beta^2 (2 x_1 x_2^{2} x_3 x_4 +
         2x_1^{2} x_2 x_3 x_4 +
          x_1^{2} x_2^{2} x_4)
         + \beta (x_2^{2} x_3 x_4 +
         x_1 x_2 x_3 x_4 +
         x_1^{2} x_3 x_4 +
         2x_1 x_2^{2} x_4 + 
         2x_1^{2} x_2 x_4\\ + 
         x_1^{2} x_2^{2} x_3+ 
         2x_1 x_2^{2} x_3 + 
         2x_1^{2} x_2 x_3 +
         x_1^{2} x_2^{2})+          x_2^{2} x_4  + 
         x_1 x_2 x_4 + x_1^{2} x_4 + 
         x_2^{2} x_3 + 
         + x_1 x_2 x_3 + 
         x_1^{2} x_3 + 
         x_1 x_2^{2} +
         x_1^{2} x_2.
    \end{multline*}
\end{eg}

\subsection{Proof of \Cref{thm:GroveCharacterization}}
It suffices to show everything for $\beta=1$.
First, we show that the defining relations for grove polynomials have at most one solution.
\begin{lem}
\label{lem:unqiqueifexists}
    There is at most one family of polynomials $(P_F)_{F\in\indfor}$ satisfying $P_\emptyset=1$, $\ct P_{F}=0$ for $F\neq\emptyset$, and for any $F,i$
$$\ktope{i}P_{F}=\begin{cases}P_{F/i}&i\in \qdes{F}\\ -\rope{i}P_{F}&i\not\in \qdes{F}.\end{cases}$$
\end{lem}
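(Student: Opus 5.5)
Suppose $(P_F)$ and $(Q_F)$ both satisfy the relations of \Cref{lem:unqiqueifexists} (with $\beta=1$). I will show $P_F=Q_F$ by induction on $|F|$. For $F=\emptyset$ this holds because $P_\emptyset=Q_\emptyset=1$. Now fix $F\neq\emptyset$, assume $P_G=Q_G$ for all $G$ with $|G|<|F|$, and set $D=P_F-Q_F$. Then $\ct D=0$.

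\textbf{Behaviour of $D$ under the operators.} If $i\in\qdes{F}$, then $\ktope{i}D=P_{F/i}-Q_{F/i}=0$, since $|F/i|=|F|-1$. If $i\notin\qdes{F}$, then $\ktope{i}D=-\rope{i}D$. Using $\ktope{i}=\tope{i}-\rope{i}$ (the case $\beta=1$), this says exactly $\tope{i}D=0$. So for every $i\notin\qdes{F}$ we have $\tope{i}D=0$, equivalently $\rope{i}D=\rope{i+1}D$. For $i\in\qdes{F}$ we instead have $\tope{i}D=\rope{i}D$. Note that $\qdes{F}$ is a finite nonempty set, since $F\neq\emptyset$.

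\textbf{Plan: show $D=0$ directly.} I would rather not rely on uniqueness results for forest polynomials here. Write $\ktope{i}=\rope{i}\pi_i$, so $\ktope{i}D=0$ means $\rope{i}\pi_i D=0$. The cleanest route I see is an auxiliary induction on the degree, or on a leading monomial of $D$. Take the lowest-degree homogeneous component $D_0$ of $D$. Since $\ktope{i}=\tope{i}-\rope{i}$, where $\tope{i}$ lowers degree by one and $\rope{i}$ preserves degree, looking at the bottom-degree piece of $\ktope{i}D=0$ (for $i\in\qdes{F}$) gives $\tope{i}D_0=0$. For $i\notin\qdes{F}$ we already have $\tope{i}D=0$ in every degree. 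Hence $\tope{i}D_0=0$ for all $i$.

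\textbf{Closing the argument.} A polynomial killed by all $\tope{i}$ is constant: this is the fact, from \cite{NST_1}, that $\bigcap_i\ker\tope{i}$ consists of the constants. It can also be checked directly, since $\rope{i}f=\rope{i+1}f$ for all $i$ forces $f(0,x_1,x_2,\dots)=f(x_1,0,x_2,\dots)=\cdots$, which pins $f$ down to a constant. So $D_0$ is constant. Since $\ct D=0$, the only way this is consistent is $D_0=0$. But $D_0$ was the lowest nonzero component of $D$, so this is a contradiction unless $D=0$. Hence $P_F=Q_F$.

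\textbf{Main obstacle.} The delicate point is the degree bookkeeping in the second step. Because $\rope{i}$ preserves degree, the lowest-degree part of $(\tope{i}-\rope{i})D$ is $\tope{i}D_0$, in degree $\deg D_0-1$. This is strictly below the degree of $\rope{i}D$, so the extraction is legitimate, and it works uniformly in both cases $i\in\qdes{F}$ and $i\notin\qdes{F}$. If I were instead doubtful about the fact that $\bigcap_i\ker\tope{i}$ is the constants, I would verify it first. One route is via the duality $\ct\tope{G}\forestpoly{H}=\delta_{G,H}$ together with the forest polynomials forming a basis. The other is the direct check above: repeatedly applying $\rope{i}=\rope{i+1}$ shows every variable can be set to zero.
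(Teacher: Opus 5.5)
Your proof is correct, but it takes a different route from the paper's. Both proofs start the same way: with $D=P_F-Q_F$, induction gives $\ktope{i}D=0$ for $i\in\qdes{F}$ and $\tope{i}D=0$ for $i\notin\qdes{F}$.

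The paper keeps the extra $-\rope{i}$ term. It rewrites the two cases as $\rope{i+1}D=(1+x_i)\rope{i}D$ and $\rope{i+1}D=\rope{i}D$, then telescopes to $D=\rope{N}D=\prod_{i\in\qdes{F}}(1+x_i)\,\rope{1}D$. Iterating $\rope{1}$ gives the closed form $D=\ct(D)\prod_{i\in\qdes{F}}\prod_{j=1}^{i}(1+x_j)$, which vanishes because $\ct(D)=0$.

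You instead remove the $-\rope{i}$ term by passing to the lowest nonzero homogeneous component $D_0$, of degree $d$. Since $\tope{i}$ lowers degree by one and $\rope{i}$ preserves it, the degree-$(d-1)$ component of $(\tope{i}-\rope{i})D$ is exactly $\tope{i}D_0$. Hence $D_0\in\bigcap_i\ker\tope{i}=\ZZ$, and $\ct D=0$ then gives the contradiction. Two small wording points do not affect validity: the phrase ``the lowest-degree part of $(\tope{i}-\rope{i})D$ is $\tope{i}D_0$'' should say ``the degree-$(d-1)$ component'', and the aside $\ktope{i}=\rope{i}\pi_i$ is never used.

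What each approach gives you:
\begin{itemize}
\item The paper's computation needs no grading. It also yields more information: any solution of the system is $\ct(D)$ times an explicit product with constant term $1$, so solutions are determined by their constant term.
\item Your argument reduces the deformed uniqueness to the undeformed fact $\bigcap_i\ker\tope{i}=\ZZ$, which is the uniqueness mechanism for forest polynomials. It works for any deformation of $\tope{i}$ by terms that do not lower degree. It is also the same lowest-degree trick the paper uses later in the proof of \Cref{th:grove_expansions}.
\end{itemize}
Your direct check that $\bigcap_i\ker\tope{i}=\ZZ$ (use $f=\rope{N}f=\rope{1}f$ for large $N$, then iterate) is exactly the paper's telescoping with the factors $(1+x_i)$ removed.
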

\begin{proof}
For the sake of contradiction, assume $P_F,Q_F$ are two families satisfying these conditions. 
Writing $f_F=P_F-Q_F$, we want to prove that $f_F=0$ for all $F\in\indfor$. 
We proceed by induction on $|F|$, starting with  $f_\emptyset=1-1=0$. 

Let $F\neq \emptyset$. 
By induction we have $f_{F/i}=0$ for any $i\in \qdes{F}$. Set $A\coloneqq \qdes{F}$.
Subtracting the relations for $P_F$ and $Q_F$, and recalling $\ktope{i}=\tope{i}-\rope{i}$ and $x_i\tope{i}=\rope{i+1}-\rope{i}$, we obtain
\[
\rope{i+1}f_F=
\begin{cases}
(1+x_i)\rope{i}f_F &i\in A\\ 
\rope{i}f_F &i\not\in A.
\end{cases}
\]
Applying these iteratively for $i=N,N-1,\ldots,1$ for $N$ sufficiently large we have
$$f_F=\rope{N}f_F=\left(\prod_{i\in A}(1+x_i)\right)\rope{1}f_F.$$
Iterating this identity recursively with the $f_F$ on the right hand side, and observing that $\rope{1}^kf_F=\ct(f_F)$ for $k$ sufficiently large, we obtain
$$f_F=\ct(f_F)\prod_{i\in A}\prod_{j=1}^i(1+x_j).$$
The right hand side vanishes since $\ct(f_F)=\ct(P_F)-\ct(Q_F)=0$, concluding the proof.
\end{proof}
 
Now we show that the combinatorial model in Definition~\ref{defn:setvalued} has the correct interaction with the $\ktope{i}$ operators.

\begin{proof}[Proof of \Cref{thm:GroveCharacterization}]
We work with $\beta=1$. The combinatorial definition of grove polynomials ensures that they belong to $\ZZ[\xl]$ and $\grovepoly{\emptyset}=1$, $\ct \grovepoly{F}=0$ if $F\neq \emptyset$. By \Cref{lem:unqiqueifexists} it suffices to show that the polynomials in Definition~\ref{defn:setvalued} satisfy the desired relations. These relations are equivalent to the following.
\begin{align}
\label{eq:grove_trimming}
\tope{i}\grovepoly{F}=\left\lbrace\begin{array}{ll}
0 & i\notin \LT(F),\\
\grovepoly{F/i}+\rope{i}\grovepoly{F} & i\in \LT(F).
\end{array}\right.
\end{align}

Let $\sigma_i$ be the unique increasing bijection $\NN\setminus\{i\}\to \NN$, and $f_i=\sigma_{i+1}^{-1}\sigma_i$ be the unique increasing bijection $\NN\setminus\{i\}\to \NN\setminus\{i+1\}$.
Both maps act naturally on finite sets $S\subseteq\NN$ with $i\notin S$, and on labelings  $\kappa$ with $i\notin\kappa(v)$ for all $v$ by $(\sigma_i\kappa)(v)=\sigma_i(\kappa(v))$ and $(f_i\kappa)(v)=f_i(\kappa(v)).$

Let
\[
  \compatiblesub{i}{F}\coloneqq \{\kappa\in\kompatible{F}: i\notin\kappa(v)\text{ for all }v\in\internal{F}\}.
\]
If $\kappa\in\compatiblesub{i}{F}$, then $\rope{i}(\alpx_{\kappa})=\alpx_{\sigma_i\kappa}$, whereas $\rope{i}(\alpx_{\kappa})=0$ if $i\in\kappa(v)$ for some $v$.
Therefore
\begin{equation}
\label{eq:Ri_as_shift_sum}
  \rope{i}\grovepoly{F}=\sum_{\kappa\in\compatiblesub{i}{F}} \alpx_{\sigma_i\kappa},
  \qquad
  \rope{i+1}\grovepoly{F}=\sum_{\kappa\in\compatiblesub{i+1}{F}} \alpx_{\sigma_{i+1}\kappa}.
\end{equation}

Since $f_i$ is strictly increasing on its domain, it preserves all weak/strict inequalities, so $f_i\kappa\in\compatiblesub{i+1}{F}$ whenever $\kappa\in\compatiblesub{i}{F}$. This gives an injection $\compatiblesub{i}{F}\to \compatiblesub{i+1}{F}$ which preserves the weights in \eqref{eq:Ri_as_shift_sum} since $\sigma_{i+1}(f_i\kappa)=\sigma_i\kappa$ by definition. It follows
\begin{equation}
\label{eq:pairing_reduction}
  \tope{i}\grovepoly{F}\,=\,\frac{1}{x_i}\left(\rope{i+1}\grovepoly{F}-\rope{i}\grovepoly{F}\right)
  \,=\,\frac{1}{x_i} 
  \sum_{\kappa\in\mathcal{G}_i(F)} \alpx_{\sigma_{i+1}\kappa},
\end{equation}
where $\mathcal{G}_i(F)$ consists of those labelings in $\compatiblesub{i+1}{F}$ that are not of the form $f_i\kappa$ for $\kappa\in \compatiblesub{i}{F}$. These are exactly labelings $\kappa\in\compatiblesub{i+1}{F}$ with a node $v\in\internal{F}$ with $\rho_F(v)=i$  such that $i\in\kappa(v)$. By replacing $v$ repeatedly with its left child $v_L$ if necessary, one can assume that $v_L$ is a leaf. This then forces $v_R$ to be a leaf as well, since the inequalities $\max \kappa(v)=i<\min \kappa(v_R)<i+1=\rho_F(v_R)$ cannot be satisfied if $v_R\in\internal{F}$. 

In summary, $\mathcal{G}_i(F)$ is the set of $\kappa\in\compatiblesub{i+1}{F}$ such that there is a terminal node $u$ in $F$ with leaf-children $i$ and $i+1$, and moreover $i=\max \kappa(u)$. If there is no such node $u$, $\mathcal{G}_i(F)$ is empty and thus \eqref{eq:pairing_reduction} gives us the first case of \eqref{eq:grove_trimming}. Otherwise we distinguish two cases:

\begin{itemize}
\item  If $\kappa(u)=\{i\}$, then consider the labeling of $F/i$ obtained by deleting the node $u$, and then applying $\sigma_{i+1}$ to the remaining labels. This establishes a bijection between compatible labelings of $F$ with $\kappa(u)=\{i\}$ and $\kompatible{F/i}$.
The contribution to~\eqref{eq:pairing_reduction} in this case is exactly $\grovepoly{F/i}$.
\item Now suppose $\kappa(u)=S\sqcup \{i\}$ with $S\neq \emptyset$.
Construct the labeling $\kappa^{-}$ from $\kappa$ by removing the element $i$ from $\kappa(u)$, and subsequently apply $f_i^{-1}$, i.e. replace every occurrence of $i$ in $\kappa^{-}$ by $i+1$.
Clearly this labeling is compatible and avoids $i$ by construction; in fact we get a bijection to $\compatiblesub{i}{F}$. The contribution to~\eqref{eq:pairing_reduction} is exactly
$\rope{i}\grovepoly{F}$.
\end{itemize}

The two cases taken together imply the second case in \eqref{eq:grove_trimming} and conclude the proof.
\end{proof}

\section{Grove Extractors and applications to positivity}

The operators $\cpi_i$ act as extractors for the basis of Grothendieck polynomials, in the sense that they can be used to pick coefficients in that basis (see~\eqref{eq:schubert_grothendieck_extractors}). The analogue for grove polynomials is more complicated -- we define the pair of operations
\begin{align}\label{eq:grove_extractors}
\ktopeL{i}&=\ktope{i}+\beta\rope{i}=\tope{i},\\
\ktopeR{i}&=\ktope{i}+\beta \rope{i+1}=(1+\beta x_i)\tope{i}.\end{align}
By Theorem~\ref{thm:GroveCharacterization} and the remark following it, these satisfy the formulas
\begin{align}\label{eq:to_be_used_later}
\ktopeL{i}\,\grovepoly{F}=
\begin{cases}\grovepoly{F/i}+\rope{i}\grovepoly{F}&i\in \qdes{F}\\0&i\not\in \qdes{F}
\end{cases}
\quad\text{ and }\quad\ktopeR{i}\,\grovepoly{F}=
\begin{cases}\grovepoly{F/i}+\rope{i+1}\grovepoly{F}&i\in \qdes{F}\\0&i\not\in \qdes{F}.
\end{cases}
\end{align}

The composition of operators $\ktopeR{i},\ktopeL{i}$ together with $\rope{i}$ are best understood via a \textit{graphical calculus}. Let $\poly_1=\ZZ[x]$, so we have the natural identification $\ZZ[\alpx]=\poly_1^{\otimes \infty}$. Define $\tope{}:\poly_1^{\otimes 2}\to \poly_1$ by $\tope{}{f(x,y)}=(f(x,0)-f(0,x))/x$, and $\rope{}:\poly_1\to \poly_0$ by $\rope{}(f(x))=f(0)$. Also let $\ktopeL{}=\tope{}$ and $\ktopeR{}=(1+\beta y)\tope{}$. Then notice that for any $O\in\{\ktopeL{},\ktopeR{},\rope{}\}$,
$$
O_{i}=\idem^{\otimes i-1}\otimes\; O\otimes \idem^{\otimes \infty}.
$$
These satisfy universal commutation relations, as do any shifts of given fixed operators $\poly_1^{\otimes 2}\to \poly_1$ and $\poly_1\to \poly_0$, which come from acting on disjoint sets of variables.  
The composition of such operators up to these commutations can be encoded as ``marked nested forests'' (see \cite[Remark 3.17]{nst_c}) defined and illustrated in Figure~\ref{fig:marked_forest_example}:
\begin{itemize}
    \item we allow trees nested under each other, in addition to the sequential order,
    \item internal nodes have to be labelled to indicate which $\poly_1^{\otimes 2}\to \poly_1$ operator is used (in our case, $L,R$ will stand for $\ktopeL{},\ktopeR{}$),
    \item tree roots may carry an extra marking $\times$ (and must carry such a marking if the root is nested below another tree) to indicate a one-to-zero operation (in our case we have just $\rope{}$, so we will need no extra labeling).
    \end{itemize}  

\begin{figure}[!ht]
    \centering
    \includegraphics[width=0.7\linewidth]{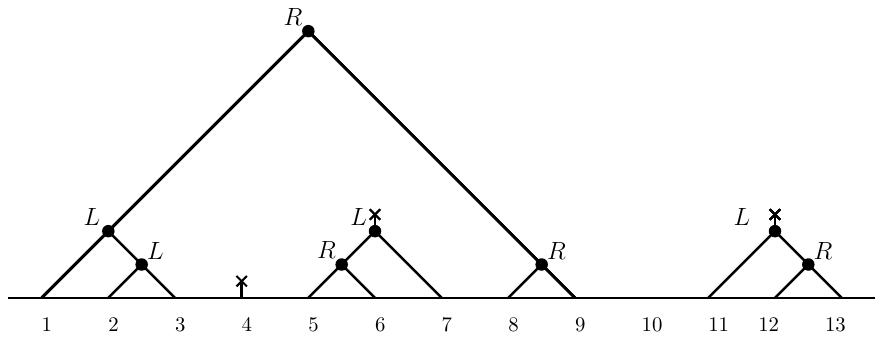}
    \caption{Marked forest for  $\ktopeR{1}\ktopeL{1}\ktopeL{2}\rope{4}\rope{5}\ktopeL{5}\ktopeR{5}\ktopeR{8}\rope{11}\ktopeL{11}\ktopeR{12}$}
    \label{fig:marked_forest_example}
\end{figure}

It follows in particular that there is a naturally defined operator $\ktopeH{F}$, the \emph{grove extractor}, defined recursively by $\ktopeH{\emptyset}=\idem$ and for any $i\in \qdes{F}$
$$\ktopeH{F}=\begin{cases}\ktopeH{F/i}\ktopeR{i}&i\text{ is a right child in $F$}\\\ktopeH{F/i}\ktopeL{i}&\text{ otherwise.}\end{cases}$$
We can  represent it by an indexed forest with labels $R$ on right children and $L$ on left children and roots, see Figure~\ref{fig:G_forest_example}.
\begin{figure}[!ht]
    \centering
    \includegraphics[width=0.5\linewidth]{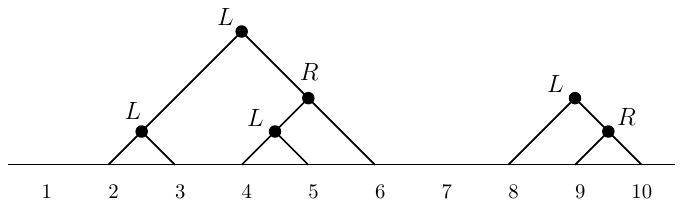}
    \caption{Labeled forest $F$ for  $\ktopeH{F}=\ktopeL{2}\ktopeL{2}\ktopeR{4}\ktopeL{4}\ktopeL{8}\ktopeR{9}$}
    \label{fig:G_forest_example}
\end{figure}

In this section we show the following theorem.

\begin{thm}\label{thm:groveextractors}
    The grove polynomials are dual to the $\ktopeH{F}$ operators in the sense that
$$\ct \ktopeH{F}\,\grovepoly{G}=\delta_{F,G}.$$
\end{thm}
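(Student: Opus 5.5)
The proof is an induction on $|F|$. It uses the recursive definition of $\ktopeH{F}$ and the trimming formulas \eqref{eq:to_be_used_later}. Set $\beta=1$ as in the proof of \Cref{thm:GroveCharacterization}. For $F=\emptyset$ we have $\ktopeH{\emptyset}=\idem$, so $\ct\grovepoly{G}=\delta_{\emptyset,G}$ by \Cref{thm:GroveCharacterization}. For $F\neq\emptyset$, choose $i\in\qdes{F}$ and write $\ktopeH{F}=\ktopeH{F/i}\,O_i$, where $O_i=\ktopeR{i}$ if $i$ is a right child in $F$ and $O_i=\ktopeL{i}$ otherwise. Apply this to $\grovepoly{G}$ using \eqref{eq:to_be_used_later}.

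If $i\notin\qdes{G}$, then $O_i\grovepoly{G}=0$, and also $F\neq G$, so both sides vanish. If $i\in\qdes{G}$, we get
\[
\ct\ktopeH{F}\grovepoly{G}=\ct\ktopeH{F/i}\grovepoly{G/i}+\ct\ktopeH{F/i}\,\rope{j}\,\grovepoly{G},
\]
with $j\in\{i,i+1\}$ determined by the label. The first term equals $\delta_{F/i,G/i}$ by induction. This in turn equals $\delta_{F,G}$, because $F=(F/i)\cdot i$ and $G=(G/i)\cdot i$, so $F/i=G/i$ forces $F=G$. The whole theorem therefore reduces to a vanishing statement:
\[
\ct\,\ktopeH{F/i}\,\rope{j}\,\grovepoly{G}=0 .
\]

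To prove the vanishing, I would use the graphical calculus of marked nested forests. The composite $\ktopeH{F/i}\rope{j}$ is the labeled forest $F$ in which the terminal node at $i$ has been replaced by a marked root (the operator $\rope{}$) sitting at one leaf position. The point of the $L/R$ labelling is that this mark can be pushed upward. Consider the local composites $\ktopeL{}\circ(\rope{}\otimes\idem)$ and $\ktopeR{}\circ(\idem\otimes\rope{})$, i.e.\ a marked leaf entering its parent on the side that matches the parent's label. A direct one- and two-variable computation with $\tope{}f=(f(x,0)-f(0,x))/x$ should express each of them as a combination of composites in which the marking has moved to the parent node, together with a term involving a smaller tree. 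Iterating, the mark travels up to a root of the forest.

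Once the mark sits at a root, the relation $\ct\circ\rope{k}=\ct$ removes it. What remains is $\ct$ of grove extractors for strictly smaller labeled forests, applied to $\grovepoly{G}$ or to polynomials obtained from $\grovepoly{G}$ by the trimming formulas. I would organize this as a strengthened induction hypothesis: for every labeled forest $H$ and every admissible marked position compatible with the $L/R$ rule, $\ct\ktopeH{H}\rope{j}\grovepoly{G}=0$. The base case is a single node, where $\ct\ktopeL{i}\grovepoly{G}$ contributes $\ct\rope{i}\grovepoly{G}=\ct\grovepoly{G}=0$ for $G\neq\emptyset$.

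The main obstacle is this vanishing lemma. Specifically, the delicate part is checking that the local commutation identities for a marked child under an $L$ or $R$ node close up inside the class of operators covered by the strengthened hypothesis, so that the induction is well founded. If the local rewriting does not close up cleanly, my fallback is the combinatorial model. I would prove directly, via the bijections used in the proof of \Cref{thm:GroveCharacterization}, that $\rope{j}\grovepoly{G}$ expands in grove polynomials only for forests $G'$ such that the pair $(F/i,G')$ cannot produce a nonzero pairing, using the inductive duality for $F/i$.
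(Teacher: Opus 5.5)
Your reduction is the same as the paper's. You choose $i\in\qdes{F}$, dispose of $i\notin\qdes{G}$, and use \eqref{eq:to_be_used_later} with induction to reduce to $\ct\ktopeH{F/i}\rope{j}\grovepoly{G}=0$ when $i\in\qdes{G}$. That vanishing is the whole content of the theorem, and your argument for it has a genuine gap.

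\textbf{The local picture is wrong.} There is no configuration in which a marked leaf enters its parent. If the node at $i$ is a right child of $v$, then in $F/i$ leaf $i$ is the rightmost leaf of the subtree at $v$. The mark of $\rope{i+1}$ therefore lies outside that subtree: it is nested under the lowest common ancestor of leaves $i,i+1$ of $F/i$, or not nested at all. The left-child case is symmetric.

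\textbf{The strengthened hypothesis is false.} Asking that $\ct\ktopeH{H}\rope{j}\grovepoly{G}=0$ for every labeled $H$ and admissible mark cannot hold without a condition tying $G$ to $(H,j)$. Take $\grovepoly{1}=x_1$, the one-node forest with leaves $1,2$. Then $\ct\ktopeL{1}\rope{2}\grovepoly{1}=\ct\tope{1}x_1=1$, consistent with $\ktopeL{1}\rope{2}=\rope{1}\ktopeL{2}+\rope{2}\ktopeL{1}$. Your base case gets vanishing only by dropping the $\grovepoly{G/i}$ term.

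\textbf{Pushing the mark out does not finish the argument.} That step is just Corollary~\ref{cor:Straightencor}. It writes $\ct\ktopeH{F/i}\rope{j}=\sum c_{F'}\ct\ktopeH{F'}$ with $|F'|<|F|$, so by induction your quantity equals $\sum c_{F'}\delta_{F',G}$. You still have to show that no $F'$ equals $G$, and your plan never addresses this. Your fallback only restates the goal: by duality for $F/i$, it is the claim that $\grovepoly{F/i}$ does not occur in $\rope{j}\grovepoly{G}$, and you give no way to prove it.

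\textbf{The missing idea is an invariant of the straightening process.} Because the mark is not nested inside the subtree at $v$, the relations of Lemma~\ref{lem:relations} never destroy the fact that leaf $i$ is a right child. In the left-child case, with $\rope{i}$, leaf $i+1$ stays a left child. Hence $i\notin\qdes{F'}$ for every $F'$ produced, whereas $i\in\qdes{G}$. So $F'\neq G$, and the ordinary induction hypothesis kills every term; no strengthening is needed.

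The case where the node at $i$ is a root of $F$ must be treated separately, but it is easy. The mark is not nested at all, so $\ktopeH{F/i}\rope{i}=\rope{i}\ktopeH{F'}$, where $F'$ is $F$ with that node deleted. Since $i\notin\qdes{F'}$, induction gives $\ct\ktopeH{F'}\grovepoly{G}=0$.
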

\begin{eg}
    For the $F$ in Example~\ref{eg:grove_polynomial} we have $\ktopeH{F}=\ktopeH{2}^{L}\,\ktopeH{2}^{L}\,\ktopeH{4}^{R}=\ktopeH{2}^{L}\,\ktopeH{3}^{R}\,\ktopeH{2}^{L}$.
    One checks that
    \[
        \grovepoly{F} \xrightarrow{\ktopeH{2}^L} \grovepoly{2\cdot 3}+\grovepoly{1\cdot1\cdot3}\xrightarrow{\ktopeH{3}^R}  \grovepoly{2}+\grovepoly{2\cdot3}+\grovepoly{1\cdot 1}+\grovepoly{1\cdot1\cdot3}\xrightarrow{\ktopeH{2}^L} \grovepoly{\emptyset}+\grovepoly{1}\xrightarrow{\ct}1,
    \]
    where each summand has its indexed forest written as its Thompson monoid representative.
\end{eg}

In the remainder of the section we work with $\beta=1$. 
To do this, we first record identities which help expand $\ct\ktopeH{F}\rope{i}$ as a nonnegative linear combination of $\ct\ktopeH{F'}$. 
These are immediately verified, so we omit the proofs.
\begin{lem}\leavevmode
\label{lem:relations}
 The identities in Figure~\ref{fig:extractor_relations} are true. In symbols, if $A,B\in \{L,R\}$ then
 \begin{align*}\ktopeH{i}^A\rope{i+1}\ktopeH{i+1}^B&=\rope{i+1}\ktopeH{i}^A\ktopeR{i+1}+\rope{i}\ktopeH{i+1}^A\ktopeL{i+1}+\rope{i}\rope{i+2}\ktopeH{i+1}^A,\\
\ktopeH{i}^A\rope{i+1}&=\rope{i}\ktopeH{i+1}^A+\rope{i+1}\ktopeH{i}^A.
\end{align*}
    
    \begin{figure}[!ht]
    \centering
    \includegraphics[width=0.8\textwidth]{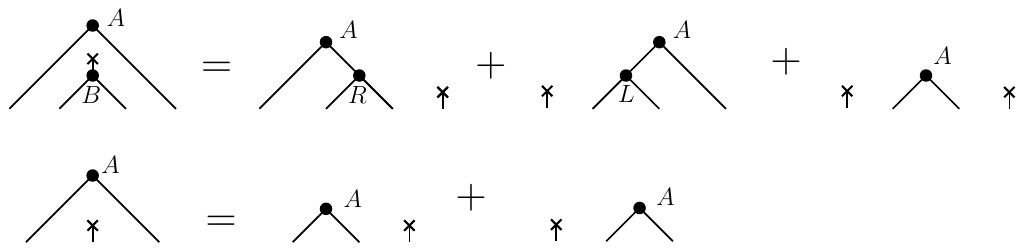}
    \caption{Relations between $\ktopeL{i}$, $\ktopeR{i}$, and $\rope{i}$ depicted graphically. These represent a region of a marked nested forest included in a topological disk.
    }
    \label{fig:extractor_relations}
\end{figure} 
\end{lem}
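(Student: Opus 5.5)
The plan is to reduce both identities to finite computations on a few tensor factors, using the locality recorded just before the statement: each of $\ktopeL{i},\ktopeR{i},\rope{i}$ is $\idem^{\otimes i-1}\otimes O\otimes\idem^{\otimes\infty}$ for a fixed $O\in\{\ktopeL{},\ktopeR{},\rope{}\}$. Both sides of each identity touch only a bounded window of consecutive factors: four input factors for the first identity and three for the second. After shifting to $i=1$, it therefore suffices to compare two linear maps $\poly_1^{\otimes 4}\to\poly_1$ (respectively $\poly_1^{\otimes 3}\to\poly_1$). By linearity I will check them on pure tensors of monomials $x^{a}\otimes x^{b}\otimes x^{c}\otimes x^{d}$. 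Throughout I read $\ktopeR{}$ as $(1+\beta\,\cdot)$ applied \emph{after} $\tope{}$, i.e. multiplication by the output variable, consistent with $\ktopeR{i}=(1+\beta x_i)\tope{i}$.

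I begin with the second identity, which is a two-variable statement. On the left, $\rope{i+1}$ kills the middle factor, so $\ktopeH{i}^A\rope{i+1}$ applied to $x^p\otimes x^b\otimes x^q$ vanishes unless $b=0$. When $b=0$ it equals $O(x^p\otimes x^q)$ with $O=\ktopeH{}^A$. On the right, $\rope{i}\ktopeH{i+1}^A$ gives $\delta_{p,0}\,O(x^b\otimes x^q)$ and $\rope{i+1}\ktopeH{i}^A$ gives $\delta_{q,0}\,O(x^p\otimes x^b)$. The key observation is that $\tope{}(x^p\otimes x^q)=(x^p\cdot 0^q-0^p x^q)/x$ vanishes whenever $p,q>0$ and also when $p=q=0$. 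The same holds for $(1+\beta x)\tope{}$. So $O$ is supported on tensors with exactly one nonzero exponent. A short case check ($b=0$ versus $b>0$, and which of $p,q$ vanish) then matches the two sides.

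For the first identity, the main simplification is that $\rope{}\circ\ktopeH{}^B:\poly_1^{\otimes 2}\to\poly_0$ does not depend on $B$. Setting the output variable to $0$ kills the factor $1+\beta x$, and $\rope{}\tope{}(x^b\otimes x^c)$ equals $\delta_{(b,c),(1,0)}-\delta_{(b,c),(0,1)}$. Hence the left side, $\ktopeH{i}^A\rope{i+1}\ktopeH{i+1}^B$, is $O(x^a\otimes x^d)$ times this functional in $(b,c)$. I then evaluate the three right-hand terms on the same monomials, again using that $O$ and $\tope{}$ are supported on tensors with exactly one nonzero exponent. This leaves only the cases $(b,c)\in\{(1,0),(0,1),(0,0)\}$ together with $a$ or $d$ zero, plus a few cases where $b$ or $c$ is $\geq 2$ that should vanish on both sides. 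The term $\rope{i}\rope{i+2}\ktopeH{i+1}^A$ should absorb the diagonal cases, and the $\ktopeR{i+1}$ versus $\ktopeL{i+1}$ terms should differ by the expected $\beta$ correction.

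I expect the bookkeeping in this last step to be the main obstacle. The difficulty is tracking which original factor each operator acts on after the earlier operators have shifted indices, together with the order of composition and the placement of $\beta x$ in $\ktopeR{}$. The sign and $\beta$ checks in the cases $(b,c)=(1,0)$ and $(0,1)$ are where an error would show up. I would first verify the identity on the handful of low-degree monomials with $a,b,c,d\le 1$ and then argue that higher degrees vanish on both sides by the support observation.
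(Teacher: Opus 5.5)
Your strategy of localizing to four (resp.\ three) consecutive tensor factors and testing on monomials is the direct verification the paper has in mind when it omits the proof. Your case check of the second identity is correct, and it holds for every $\beta$. The first identity is where the plan fails. You keep $\beta$ general, but the lemma sits under the standing convention ``in the remainder of the section we work with $\beta=1$'', and for $\beta\neq1$ it is false. Take $A=L$ and $f=x_{i+1}$, your monomial $(0,1,0,0)$. The left side is $\tope{i}(1)=0$, since $\rope{i+1}\ktopeH{i+1}^B x_{i+1}=1$. The three right-hand terms give $\rope{i+1}\tope{i}(1+\beta x_{i+1})=-\beta$, then $0$, then $\rope{i}\rope{i+2}(1)=1$. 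So there is no ``expected $\beta$ correction'' between the $\ktopeR{i+1}$ and $\ktopeL{i+1}$ terms to find. The $\beta x_{i+1}$ produced by $\ktopeR{i+1}$ must be cancelled by the third term, which forces $\beta=1$; for general $\beta$ that term would need a factor $\beta$.

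Your reduction to $a,b,c,d\le1$, with higher degrees disposed of ``by the support observation'', also does not work. Support only separates zero from positive exponents. For example, $(a,1,0,0)$ with $a\ge2$ has left side $x^{a-1}\neq0$ (for $A=L$). At $(0,b,0,0)$ with $b\ge2$ the right-hand terms are $-x^{b-2}-\beta x^{b-1}$, $x^{b-2}$ and $x^{b-1}$, which are nonzero individually and cancel only in the sum.

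The right split is by the zero pattern of $(a,d)$. Put $\tau=\tope{}(x^b\otimes x^c)$, so that $\tau(0)=\delta_{(b,c),(1,0)}-\delta_{(b,c),(0,1)}$ is your functional, and set $\epsilon_A=\delta_{A,R}$.
If $a,d>0$, every term vanishes.
If exactly one of $a,d$ is $0$, only the first right-hand term (when $d=0$) or the second (when $a=0$) survives. Since the other input of the outer $\ktopeH{}^A$ has positive exponent, only $\tau(0)$ contributes, and this reproduces the left side for any $\beta$.
If $a=d=0$, the left side is $0$ and the right side is
\[
(1+\epsilon_A\beta x)\Bigl(\frac{\tau(0)-(1+\beta x)\tau(x)}{x}+\frac{\tau(x)-\tau(0)}{x}+\tau(x)\Bigr)=(1-\beta)(1+\epsilon_A\beta x)\,\tau(x).
\]
This vanishes for all $b,c$ exactly when $\beta=1$. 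With $\beta=1$ imposed, these three cases complete the verification.
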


\begin{cor}
\label{cor:Straightencor}Any operator of the form $\ktopeH{F}\rope{i}$ for $F\in\indfor$ can be rewritten as a linear combination of operators $\rope{i_1}\cdots \rope{i_t}\ktopeH{G}$ with $i_j$ integers and $G\in\indfor$. In particular $\ct\ktopeH{F}\rope{i}$ can be rewritten as a linear combination of $\ct \ktopeH{G}$.
\end{cor}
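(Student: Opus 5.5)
We need to prove Corollary \ref{cor:Straightencor}: any $\ktopeH{F}\rope{i}$ can be rewritten as a combination of $\rope{i_1}\cdots\rope{i_t}\ktopeH{G}$. The plan is to push the $\rope{}$ operator leftward through the factors of $\ktopeH{F}$, one at a time, using the relations of Lemma~\ref{lem:relations} together with the universal commutation relations.

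\textbf{Setting up the induction.} Choose a trimming sequence and write $\ktopeH{F}=O_{j_1}\cdots O_{j_k}$ with each $O_{j}\in\{\ktopeL{j},\ktopeR{j}\}$. We induct on $k=|F|$, and more generally prove the stronger statement for any word $W$ in the letters $\ktopeL{j},\ktopeR{j}$: $W\rope{i}$ is a combination of terms $\rope{i_1}\cdots\rope{i_t}W'$. Here each $W'$ is again a word in the $\ktopeL{},\ktopeR{}$ letters, of length at most that of $W$.

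\textbf{Moving $\rope{i}$ past a single letter.} Take the rightmost letter $O_j$ of $W$ and compare $j$ with $i$.
\begin{itemize}
\item If $i<j$, then $O_j\rope{i}=\rope{i}O_{j-1}$. This is a disjoint-variables commutation, the analogue of $\tope{j}\rope{i}=\rope{i}\tope{j-1}$.
\item If $i>j+1$, then $O_j\rope{i}=\rope{i-1}O_j$.
\item If $i=j$, we would like to use $\ktopeH{i-1}^A\rope{i}=\rope{i-1}\ktopeH{i}^A+\rope{i}\ktopeH{i-1}^A$, but that relation has the wrong shape. A case with $O_j=\ktopeH{j}^A$ and $\rope{j}$ acting on its left input is not handled by the second identity as stated. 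It is handled after relabeling the second identity, which covers $\rope{}$ entering the right input at position $j+1$.
\end{itemize}
So the cases $i\in\{j,j+1\}$ need care. Rather than forcing them through single-letter rules, I would organize the argument graphically.

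\textbf{Graphical reorganization.} Represent $\ktopeH{F}\rope{i}$ as a marked nested forest: $F$ with labels $L/R$, together with a $\times$-marked trivial root attached at the leaf $i$. If leaf $i$ is a child of a node $v$, apply a relation of Figure~\ref{fig:extractor_relations} at $v$. The second identity removes node $v$ in both terms, each term leaving a marked leaf below the parent of $v$. The first identity applies when $v$ has a sibling structure $\ktopeH{}^A$ sitting above a $\ktopeH{}^B$ with the marked leaf between them. It produces terms in which the mark moves strictly upward, or in which a node is removed.

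Induct on the pair (number of internal nodes, depth of the marked leaf), ordered lexicographically. Each application either decreases the node count or moves the mark to a shallower position. The process terminates when every mark is at a root. A marked root corresponds to a factor $\rope{}$ on the far left, and the unmarked remainder is an $L/R$-labeled indexed forest $G$.

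\textbf{Main obstacle.} The crux is checking that the resulting labels are the canonical ones: $R$ on right children, $L$ on left children and roots. Only then is the remainder exactly $\ktopeH{G}$ and not some other labeled word. Relation one outputs $\ktopeR{i+1}$ and $\ktopeL{i+1}$ in positions that should be the right and left children respectively. I expect this to match the convention, but I would verify it case by case from Figure~\ref{fig:extractor_relations}. A root labeled $R$ would have to be dealt with separately, using $\ktopeR{}=\ktopeL{}+\rope{i+1}$-type identities.

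\textbf{The constant-term statement.} Finally, apply $\ct$. Since $\ct\rope{j}=\ct$, each term $\ct\rope{i_1}\cdots\rope{i_t}\ktopeH{G}$ equals $\ct\ktopeH{G}$, which gives the second claim.
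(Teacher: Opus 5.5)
Your overall plan is the paper's: draw $\ktopeH{F}\rope{i}$ as a marked nested forest, push the mark upward with Lemma~\ref{lem:relations}, then use $\ct\rope{j}=\ct$. But there is a genuine gap at exactly the step you call the crux, and it comes from misreading where the mark sits and how the identities move it.

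Since $\rope{i}$ acts first, the $\times$-leaf is not a child of any node. It sits between leaves $i-1$ and $i$ of $F$, nested under their lowest common ancestor (if any). Your single-letter rules need correcting accordingly:
\begin{itemize}
\item For $i\le j$ the rule is $O_j\rope{i}=\rope{i}O_{j+1}$, not $O_{j-1}$. For example, $\tope{2}\rope{1}x_3=1$ while $\rope{1}\tope{1}x_3=0$.
\item For $i\ge j+2$ the rule $O_j\rope{i}=\rope{i-1}O_j$ is correct.
\item So $i=j$ is a plain commutation. The only interaction is $i=j+1$, where the mark lies between the two inputs of $O_j$; this is exactly the second identity.
\end{itemize}
That identity does not remove $v$: each term still contains one $\ktopeH{}^A$. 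The freed position becomes the new left (resp.\ right) input of $v$, and the mark passes to the former left (resp.\ right) child subtree of $v$. So after one step the mark generally sits on a nontrivial tree, not on a leaf. Your induction on ``the depth of the marked leaf'' is therefore tracking the wrong object.

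This is exactly where labels can go wrong, and you leave it open (``I expect this to match\ldots''). If the second identity is applied when the marked tree is nontrivial, its root re-enters as a child of $A$ with its old label, which can be wrong. For instance, the second term of that identity produces a marked subtree whose root is labelled $R$, and that subtree can later re-enter as a left child.

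The missing idea is the paper's rule for which identity to apply:
\begin{itemize}
\item Use the bottom identity only when the $\times$-marked tree is trivial.
\item When the marked tree has a root $B$ nested directly under $A$, use the top identity. Its three terms reinsert $B$ as $\ktopeR{i+1}$ when it becomes a right child, reinsert it as $\ktopeL{i+1}$ when it becomes a left child, or delete $B$ while $A$ keeps its own position.
\end{itemize}
In every case, every internal node other than a marked root keeps its canonical label.

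Labels on marked roots are harmless, because $\rope{j}\ktopeR{j}=\rope{j}(1+\beta x_j)\tope{j}=\rope{j}\ktopeL{j}$. So no separate treatment of $R$-labelled roots is needed. Your suggested fix ``$\ktopeR{}=\ktopeL{}+\rope{i+1}$'' is also incorrect; the identity is $\ktopeR{i}=\ktopeL{i}+\beta(\rope{i+1}-\rope{i})$. Being signed, it would also lose the nonnegativity that Theorem~\ref{th:positivity_of_composites} draws from this straightening.

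Finally, the third term of the top identity creates two marks. Termination should therefore be measured over all marks, e.g.\ by the multiset of their nesting depths, since each mark is replaced by marks nested under strictly fewer nodes.
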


\begin{proof}
We apply the relations in Lemma~\ref{lem:relations} from left to right to nested marked forests, only applying the bottom relation when the marking has no nontrivial tree below it. The top relation preserves the property that trees are correctly $L/R$ colored, while the bottom relation when applied preserves this property if $\times$ is the root of a trivial tree. These relations always decrease the total nesting depth of $\times$ markings, so this process eventually terminates.
When these relations cannot be applied, then the resulting forests correspond to operators as described. 
\end{proof}

We can now prove the main result of this section.

\begin{proof}[Proof of \Cref{thm:groveextractors}]
    We work with $\beta=1$. We induct on $|F|$, the result being true for $F$ empty. Let $i\in \qdes{F}$. If $i\not\in \qdes{G}$ then with $\ktopeH{i}\in \{\ktopeL{i},\ktopeR{i}\}=\{\tope{i},(1+x_i)\tope{i}\}$ we have
    $$\ct \ktopeH{F/i}\ktopeH{i}\grovepoly{G}=0$$
    since $\tope{i}\grovepoly{G}=0$.

    If $i\in \qdes{G}$, we distinguish three cases for $i$.  
    
    Assume first that $i$ is a right child in $F$, and let $v\in \internal{F}$ be its parent. Then
    $$\ct\ktopeH{F}\grovepoly{G}=\ct \ktopeH{F/i}\ktopeR{i}\grovepoly{G}=\ct \ktopeH{F/i}(\grovepoly{G/i}+\rope{i+1}\grovepoly{G}).$$
    Because $\delta_{F/i,G/i}=\delta_{F,G}$, it remains to show that $\ktopeH{F/i}\rope{i+1}\grovepoly{G}=0$. The subtree of $F/i$ rooted at $v$ does not nest the mark $\times$ corresponding to $\rope{i+1}$, thus its combinatorial structure is not modified as we iterate the relations of Lemma~\ref{lem:relations}. In particular $i$ remains a right child through the process, so all of the resulting $\ct\ktopeH{F'}$  created (all of which have $|F'|<|F|$) do not have $i\in \qdes{F'}$, and hence $F'\ne G$ and $\ct\ktopeH{F'}\grovepoly{G}=0$ by induction.

     Assume now that $i$ is a left child or the root of a tree in $F$. Then we have $$\ct\ktopeH{F}\grovepoly{G}=\ct\ktopeH{F/i}\ktopeL{i}\grovepoly{G}=\ct\ktopeH{F/i}(\grovepoly{G/i}+\rope{i}\grovepoly{G}).$$
    If $i$ is a left child, then one can proceed completely similarly to the case of the right child. Finally, if $i$ is a root then we have a commutation $\ktopeH{F/i}\rope{i}=\rope{i}\ktopeH{F'}$, where $F'$ is obtained by deleting the $\wedge$ associated to $i$ in $F$. So it remains to show that $\ct \ktopeH{F'}\grovepoly{G}=\delta_{F',G}$ is equal to $0$, which is true because $i\not\in \qdes{F'}$ since $i$ is now a trivial tree in $F'$.
\end{proof}

\subsection{The basis of grove polynomials}
\begin{prop}
    The forest polynomial $\forestpoly{F}$ is the lowest degree homogeneous component of the grove polynomial $\grovepoly{F}$.
\end{prop}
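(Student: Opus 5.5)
Working directly from Definition~\ref{defn:setvalued} is the natural route. Every $\kappa\in\kompatible{F}$ assigns a nonempty set to each of the $|F|$ internal nodes, so $|\kappa|\ge |F|$. Hence each monomial $\alpx_\kappa$ has degree $|\kappa|\ge|F|$, and the term $\beta^{|\kappa|-|F|}\alpx_\kappa$ is homogeneous of degree $|\kappa|$ in the variables $x_j$. Grouping terms by $|\kappa|$ therefore gives the decomposition of $\grovepoly{F}$ into homogeneous components, with $\beta$ treated as a scalar. The component of degree $|F|$ collects exactly the labelings with $|\kappa(v)|=1$ for every $v$, and it carries the factor $\beta^0=1$.

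The next step is to identify these singleton-valued labelings with the compatible labelings of Definition~\ref{defn:not_set_valued}. The identification sends $\kappa(v)=\{k\}$ to $\kappa(v)=k$. For singletons, $\max\kappa(v)=\min\kappa(v)=\kappa(v)$, so the conditions $\max\kappa(v)\le\min\kappa(v_L)$ and $\max\kappa(v)<\min\kappa(v_R)$ become exactly $\kappa(v)\le\kappa(v_L)$ and $\kappa(v)<\kappa(v_R)$. When a child is a leaf, both definitions use the same convention of Remark~\ref{rem:leaf_convention}, so the leaf conditions also agree. The bijection preserves weights, so the degree $|F|$ component equals $\sum_{\kappa\in\compatible{F}}\alpx_\kappa=\forestpoly{F}$.

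It remains to check that this component is nonzero, so that it really is the lowest one. It suffices to show that $\compatible{F}$ is nonempty. Since all coefficients are positive, no cancellation can occur. One way is to use the characterization of forest polynomials by the $\tope{i}$ together with $\ct\tope{F}\forestpoly{F}=1$, which forces $\forestpoly{F}\neq0$. A direct alternative labels each internal node $v$ by $\rho(v)$. This labeling is compatible: $\rho(v)=\rho(v_L)$, and $\rho(v_R)>\rho(v)$ because the leaves of the right subtree lie to the right of those of the left subtree.

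There is no real obstacle in this argument. The only point needing care is the degree bookkeeping: $\beta$ is treated as a scalar rather than as a variable, so homogeneity refers to degree in $x$ alone.
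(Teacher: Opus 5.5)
Your proof is correct and takes the same approach as the paper, which just says the result follows directly from Definition~\ref{defn:setvalued}. Your argument spells this out: singleton-valued labelings give the degree-$|F|$ part with coefficient $\beta^0=1$ and correspond bijectively to $\compatible{F}$. Your check that $\compatible{F}\neq\emptyset$, via the labeling $v\mapsto\rho(v)$, is a detail the paper leaves implicit.
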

\begin{proof}
This follows directly from Definition~\ref{defn:setvalued}.
\end{proof}
\begin{thm}\label{th:grove_expansions}
  Grove polynomials are a basis for $\ZZ[\alpx]$. Any $f\in \mathbb{Z}[\xl]$ can be uniquely written as
  $$f=\sum_{F\in \indfor} (\ct\ktopeH{F}f)\grovepoly{F}.$$
\end{thm}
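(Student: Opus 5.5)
Linear independence of the $\grovepoly{F}$ follows at once from \Cref{thm:groveextractors}. If $\sum_G c_G\grovepoly{G}=0$ is a finite relation, apply $\ct\ktopeH{F}$ for each $F$. This gives $c_F=0$.

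The real work is spanning. I will deduce it from the forest polynomial basis together with the proposition that $\forestpoly{F}$ is the lowest degree component of $\grovepoly{F}$. The forest polynomials form a basis of $\ZZ[\xl]$, which I take from \cite{NST_1}; this also follows from their duality with $\tope{F}$ plus a triangularity argument. The sets $\{F\in\indfor_n: |F|=d\}$ are finite, so I work degree by degree.

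The spanning argument is an induction on the lowest degree. Fix $n$ and let $f\in\ZZ[x_1,\ldots,x_n]$ be nonzero, with lowest homogeneous component $f_d$ of degree $d$. Write $f_d=\sum c_F\forestpoly{F}$, a finite sum over forests with $|F|=d$; I expect these can be taken in $\indfor_n$. Then $f-\sum c_F\grovepoly{F}$ has no terms of degree $\le d$. Its degree is still bounded, because each $\grovepoly{F}$ is a polynomial: set-valued labels are bounded by leaf values, so $\grovepoly{F}$ has degree at most about $n|F|$. Its variables also stay among $x_1,\ldots,x_n$, since labels are at most $\max$ of the leaves of $F$.

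The main obstacle is termination. Raising the lowest degree by itself does not terminate, since the subtracted groves can create new higher-degree terms. What saves us is a finiteness statement: $\grovepoly{F}$ for $F\in\indfor_n$ involves only $x_1,\ldots,x_n$ with each exponent bounded. The label sets along a left chain are weakly increasing but distinct within a node. Moreover $\indfor_n$ is finite, since forests with leaves in $[n]$ have size at most $n-1$. So everything lives in a finite-rank lattice $V_n=\operatorname{span}\{\grovepoly{F}:F\in\indfor_n\}\subseteq\ZZ[x_1,\ldots,x_n]$.

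I would therefore reorganize the argument to avoid the infinite-descent issue. The forest polynomials $\forestpoly{F}$, $F\in\indfor_n$, span the quasisymmetric coinvariant complement, not all of $\ZZ[x_1,\ldots,x_n]$. Instead I would use all of $\indfor$ with trees whose leaves are bounded by some large $N$, and run the lowest-degree induction inside $\ZZ[x_1,\ldots,x_N]$ truncated at a degree exceeding $\deg f$. There the $\grovepoly{F}$ with $|F|\le D$ give a unitriangular change of basis from the $\forestpoly{F}$ modulo terms of degree $>D$. Unitriangularity over $\ZZ$ gives integrality. To pass from "modulo high degree" to exact equality, I would show that the coefficients $\ct\ktopeH{F}f$ vanish for $|F|>\deg f$. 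Each $\ktopeH{i}$ lowers the lowest degree by one; note $\tope{i}$ is degree $-1$ and $(1+x_i)\tope{i}$ has lowest part of degree $-1$. Hence $\ct\ktopeH{F}f=0$ once $|F|>\deg f$. With that, $g=f-\sum_F(\ct\ktopeH{F}f)\grovepoly{F}$ is a finite sum and satisfies $\ct\ktopeH{F}g=0$ for all $F$ by \Cref{thm:groveextractors}. It then remains to show that such a $g$ is $0$. If $g\neq0$, take its lowest component $g_d$ and expand it as $g_d=\sum a_F\forestpoly{F}$. For $|F|=d$, $\ct\ktopeH{F}g$ equals $\ct\tope{F}g_d=a_F$, using duality of forest polynomials. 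So all $a_F=0$, hence $g_d=0$, a contradiction. This last step is clean, and the formula with the coefficients $\ct\ktopeH{F}f$ follows immediately.
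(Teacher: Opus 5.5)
Your overall plan is the paper's: linear independence from \Cref{thm:groveextractors}, then show $g=f-\sum_F(\ct\ktopeH{F}f)\grovepoly{F}$ vanishes by taking its lowest component $g_d$ and using $\ct\ktopeH{F}g=\ct\tope{F}g_d$ for $|F|=d$. That last step is fine. The gap is the finiteness step that makes $g$ a polynomial at all. Your claim that $\ct\ktopeH{F}f=0$ once $|F|>\deg f$ is false.

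Knowing that each $\ktopeH{i}$ lowers the \emph{lowest} degree by at most one says nothing about the constant term. The constant term vanishes only when the \emph{top} degree drops below zero, and $\ktopeR{i}=(1+\beta x_i)\tope{i}$ does not lower the top degree. For a counterexample, take $f=x_1+x_2$ and $F=1\cdot 2$ in the paper's Thompson notation: a root with left leaf $1$ whose right child is a node with leaves $2,3$. Then $\ktopeH{F}=\ktopeL{1}\ktopeR{2}$, with $\ktopeR{2}f=1+\beta x_2$ and $\ktopeL{1}(1+\beta x_2)=-\beta$. So $\ct\ktopeH{F}f=-\beta\neq 0$ although $|F|=2>\deg f$. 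Correspondingly $x_1+x_2=\grovepoly{2}-\beta\grovepoly{1\cdot 2}$, since $\grovepoly{2}=x_1+x_2+\beta x_1x_2$ and $\grovepoly{1\cdot 2}=x_1x_2$. The example after \Cref{th:multifundamentals_expansions} is another instance: there $\deg f=3$ but $\grovepoly{1\cdot1\cdot2\cdot3}$ has coefficient $-2$. Separately, even a correct size bound would not make the sum finite, since $\indfor$ has infinitely many forests of each size. Your final argument never restricts where $F$ sits.

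The paper's finiteness argument uses two facts instead.
\begin{enumerate}
\item[(i)] Only $L$-labelled nodes (left children and roots) cost degree. Indeed $\ktopeL{i}=\tope{i}$ lowers the top degree by one and $\ktopeR{i}$ does not raise it, so $\ktopeH{F}f=0$ whenever $F$ has more than $\deg f$ such nodes.
\item[(ii)] For $f\in\ZZ[x_1,\ldots,x_n]$ and $i>n$ one has $\tope{i}f=0$. Since $\ktopeH{F}=\ktopeH{F/i}\ktopeH{i}$ for any $i\in\qdes{F}$, this gives $\ktopeH{F}f=0$ unless $\qdes{F}\subseteq[n]$.
\end{enumerate}
Only finitely many forests satisfy both constraints. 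Every internal node has leftmost leaf at most $\max\qdes{F}\le n$. Nodes sharing a leftmost leaf form a single left chain, and only the top of each chain fails to be a left child. Hence $|F|\le n+\deg f$, and the trees are confined to the first $n+\deg f$ leaves. With (i) and (ii) replacing your degree claim, your concluding argument goes through unchanged. The intermediate exploration (the lattice spanned by $\indfor_n$, unitriangularity modulo high degree) is unnecessary and can be dropped.
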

\begin{proof}
We work with $\beta=1$, the proof for arbitrary $\beta$ is identical.
The existence of grove extractors $\ktopeH{F}$ from Theorem~\ref{thm:groveextractors} shows that the grove polynomials are $\mathbb{Z}$-linearly independent, so it suffices to show that they span. Fix $f\in \mathbb{Z}[x_1,\ldots,x_n]$. Because $\ktopeL{i}=\tope{i}$ decreases degree and $\ktopeR{i}=(1+x_i)\tope{i}$ does not increase degree, any $F$ where the number of left children is $\ge \deg(f)+1$ has $\ktopeH{F}f=0$. For $1\le i \le n$, the operators $\ktopeR{i}$ and $\ktopeL{i}$ preserve $\mathbb{Z}[x_1,\ldots,x_n]$ and for $i>n$ $\ktopeR{i}$ and $\ktopeL{i}$ kill $\mathbb{Z}[x_1,\ldots,x_n]$. There are only finitely many forests with $\qdes{F}\subset \{1,\ldots,n\}$ that have at most $\deg(f)$ many left children, so all but finitely many grove extractors applied to $f$ are zero. We claim that
$$f=\sum_F (\ct \ktopeH{F}f)\grovepoly{F}.$$
By the discussion above the right hand side is well-defined. The difference of the left and right hand side is a polynomial $g$ with the property that $\ct\ktopeH{F}g=0$ for all $F$. We claim that this implies $g=0$. 
Let $d$ be the minimal degree in $g$, and consider any forest $F$ of size $d$. For $L$ the polynomial operator that returns the lowest degree homogenous component, we claim that $$\ct \tope{F}L(f)=\ct L(\ktopeH{F}f).$$
Indeed, $\ktopeL{i}=\tope{i}$ and any $\ktopeR{i}=x_i\tope{i}+\tope{i}$ occurring in $\ktopeH{F}$ can be replaced by a $\tope{i}$ for degree reasons. If we pick $F$ such $\forestpoly{F}$ appearing nontrivially in the forest decomposition of $L(f)$, then we obtain 
$0\ne \ct \tope{F}L(f)=\ct L(\ktopeH{F}f)$, a contradiction.
\end{proof}

\subsection{Positive Expansions}

We call a polynomial $f\in \ZZ[\alpx]$ \emph{grove-positive} if its expansion in the basis of $\beta=1$ grove polynomials has nonnegative integer coefficients, and we say a polynomial operator is \emph{grove-positive} if it takes grove polynomials to nonnegative linear combinations of grove polynomials.

\begin{thm}
    \label{th:positivity_of_composites}
    The operators $\rope{i}$, $\ktopeL{i}$, and $\ktopeR{i}$ are grove-positive.
\end{thm}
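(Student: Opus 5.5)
The plan is to reduce everything to positivity of $\rope{i}$, and to prove that via the extractors. By \eqref{eq:to_be_used_later}, for $i\in\qdes{F}$ we have $\ktopeL{i}\grovepoly{F}=\grovepoly{F/i}+\rope{i}\grovepoly{F}$ and $\ktopeR{i}\grovepoly{F}=\grovepoly{F/i}+\rope{i+1}\grovepoly{F}$. For $i\notin\qdes{F}$ both operators give zero. So once $\rope{j}$ is known to be grove-positive for every $j$, the positivity of $\ktopeL{i}$ and $\ktopeR{i}$ follows at once.

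To show $\rope{i}\grovepoly{G}$ is grove-positive, I would use \Cref{th:grove_expansions}. It gives
$$\rope{i}\grovepoly{G}=\sum_{F}\bigl(\ct\ktopeH{F}\rope{i}\grovepoly{G}\bigr)\grovepoly{F},$$
so it suffices to show $\ct\ktopeH{F}\rope{i}\grovepoly{G}\ge 0$ for all $F,G$. Here \Cref{cor:Straightencor} applies. Repeatedly using the relations of Lemma~\ref{lem:relations} rewrites $\ktopeH{F}\rope{i}$ as a linear combination of operators $\rope{i_1}\cdots\rope{i_t}\ktopeH{F'}$. The key observation is that both identities in Lemma~\ref{lem:relations} have all right-hand coefficients equal to $+1$ when $\beta=1$, so every coefficient in this combination is a nonnegative integer. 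Applying $\ct$ absorbs the leading $\rope{}$'s, since $\ct\rope{j}=\ct$. We obtain $\ct\ktopeH{F}\rope{i}=\sum_{F'}c_{F,F'}\ct\ktopeH{F'}$ with $c_{F,F'}\in\ZZ_{\ge0}$. Then \Cref{thm:groveextractors} gives $\ct\ktopeH{F}\rope{i}\grovepoly{G}=c_{F,G}\ge 0$, so $\rope{i}$ is grove-positive.

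The main obstacle is checking that the straightening in \Cref{cor:Straightencor} really yields correctly $L/R$-labelled extractors $\ktopeH{F'}$ with nonnegative coefficients at every step. Concretely, we must never need a relation with a minus sign. We must also ensure that each intermediate marked nested forest is either further reducible by the two positive relations or already of the form $\rope{}\cdots\rope{}\,\ktopeH{F'}$. In particular, when a $\times$ mark sits at a root nested under a nontrivial tree, I would check that this configuration can always be resolved by the first relation rather than by some other identity. If that failed, the fallback would be to verify positivity of $\rope{i}\grovepoly{G}$ directly from the set-valued labelings. One would decompose $\kompatible{G}$ according to which labels equal $i$, as in the proof of \Cref{thm:GroveCharacterization}, and match each piece bijectively with labelings of forests obtained by contracting terminal nodes.
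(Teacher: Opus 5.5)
Your argument is correct and is essentially the paper's: both move $\rope{}$ to the left using the relations of Lemma~\ref{lem:relations}, all of which have coefficient $+1$, absorb it via $\ct\rope{j}=\ct$, and handle $\ktopeL{i},\ktopeR{i}$ through \eqref{eq:to_be_used_later}; the paper merely phrases this as positivity of $\ct\Phi\,\grovepoly{F}$ for an arbitrary composite $\Phi$, alternating the two moves, instead of reducing to $\rope{i}$ and invoking \Cref{thm:groveextractors}. Your labeling worry is settled by \Cref{cor:Straightencor}: a $\times$ on a nested nontrivial tree is exactly the left side of the first relation, and the $L/R$ label on a $\times$-marked root is immaterial because $\rope{j}\ktopeR{j}=\rope{j}\ktopeL{j}$.
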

\begin{proof}
Given Theorem~\ref{th:grove_expansions}, we want to show $\ct \ktopeH{G}X_{i}\grovepoly{F}\geq 0$, where $X\in \{\rope{},\ktopeL{},\ktopeR{}\}$. 
    We shall establish the more general claim that $\ct\Phi\,\grovepoly{F}\ge 0$ where $\Phi$ is any composite of $\rope{i}$, $\ktopeL{i}$ and $\ktopeR{i}$. 
    
    To this end, whenever we have a $\rope{k}$ as the rightmost operator in $\Phi$ we apply straightening relations \Cref{lem:relations} to move to the left, which is then absorbed into the $\ct$ operator as $\ct\rope{j}=\ct$ for any $j$.
    On the other hand, whenever $\ktopeL{i}$ or $\ktopeR{i}$ is the rightmost operator in $\Phi$, we apply the relations in~\eqref{eq:to_be_used_later}.
    This  procedure eventually terminates and all coefficients produced are nonnegative.
\end{proof}
We record some consequences of Theorem~\ref{th:positivity_of_composites} that are of combinatorial interest.
These generalize results established by the authors in the setting of forest polynomials \cite[Section 7]{NST_1}.
We first describe the interaction between Grothendieck polynomials and grove polynomials.
This result can be viewed in the same vein as the \textit{Grothendieck-to-Lascoux} expansion\footnote{originally conjectured by Reiner--Yong \cite{RY21}} due to Shimozono--Yu \cite{SY23} and the \textit{Grothendieck-to-glide} expansion due to Pechenik--Searles \cite{PS19}; see also \cite{MPS21} for a host of other ``K-theoretic polynomials'' inspired by other combinatorial considerations.

\begin{cor}
    \label{cor:positiveexps_grothendieck}
     Grothendieck polynomials are grove-positive, i.e. we have $$\groth{w}^{(\beta)}=\sum_{\ell(w)\le |F|} \beta^{|F|-\ell(w)}a_w^F \grovepolyb{F}\text{ with }a_w^F\ge 0.$$ 
\end{cor}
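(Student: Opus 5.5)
Our plan is to reduce to $\beta=1$ and then to Theorem~\ref{th:positivity_of_composites} through a convenient description of Grothendieck polynomials. For the reduction: give $x_i$ degree $1$ and $\beta$ degree $-1$. Then $\groth{w}^{(\beta)}$ is homogeneous of degree $\ell(w)$, since its lowest component is $\schub{w}$ and each additional $\beta$ comes with one extra variable. Likewise $\grovepolyb{F}$ is homogeneous of degree $|F|$ by Definition~\ref{defn:setvalued}. By Theorem~\ref{th:grove_expansions} the grove expansion is unique, and rescaling $x_i\mapsto x_i/\beta$ is compatible with this grading. Hence the coefficient of $\grovepolyb{F}$ in $\groth{w}^{(\beta)}$ has the form $\beta^{|F|-\ell(w)}a_w^F$, where $a_w^F$ is the $\beta=1$ coefficient. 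That coefficient vanishes unless $|F|\ge \ell(w)$, because lowest degree components must match: the lowest part of $\grovepoly{F}$ is $\forestpoly{F}$, of degree $|F|$. So it suffices to show that for $\beta=1$ all the coefficients $\ct\ktopeH{F}\groth{w}$ are nonnegative.

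We will use the classical representation of Grothendieck polynomials as an image of a product of linear factors. Fix $n$ with $w\in S_n$. Then $\groth{w}=\pi_{w^{-1}w_{0,n}}\groth{w_{0,n}}$ (with the appropriate sign convention at $\beta=1$), where $\groth{w_{0,n}}=x_1^{n-1}\cdots x_{n-1}$. The isobaric operators $\pi_i$ are not directly among the grove-positive operators, so we take a different route. We express $\groth{w}$ as a coefficient extraction, or write $\pi_i$ in terms of operators whose composites are grove-positive. We observe that $\ktope{i}=\rope{i}\pi_i$, but $\rope{i}$ destroys information, so this alone does not invert.

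Instead we use the following characterization. A polynomial $f$ is grove-positive iff $\ct\Phi f\ge 0$ for all composites $\Phi=\ktopeH{F}$. We then show that $\ktopeH{F}\groth{w}$ is a nonnegative combination of Grothendieck polynomials (or of products $\rope{\cdot}$ applied to them), and finally use $\ct\groth{u}=\delta_{u,\idem}\ge0$. Concretely, we compute how $\ktopeL{i}=\tope{i}=\rope{i}\partial_i$ and $\ktopeR{i}=(1+x_i)\tope{i}$ act on $\groth{w}$. Writing $\tope{i}=\rope{i}\partial_i$ and $(1+x_i)\tope{i}=\rope{i+1}\partial_i(1+x_i)$-type identities, we reduce to two facts. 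The first is that $\pi_i$ and $\partial_i$ act on $\groth{w}$ with signs controlled by $(-1)$ powers matching $\beta$-degree. The second is that $\rope{i}\groth{w}$ expands in Grothendiecks with sign-coherent coefficients; this is the $K$-theoretic analogue of the known result that $\rope{i}\schub{w}$ is Schubert-positive from \cite[Section 7]{NST_1}, available via the Bergeron--Sottile/pipe dream description. Tracking the grading shows all signs become nonnegative at $\beta=1$ after homogeneity bookkeeping.

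The main obstacle is this last step, the sign-coherent Grothendieck expansion of $\rope{i}\groth{w}$ and of $\tope{i}\groth{w}$. We would try first to prove it combinatorially via bumpless or ordinary pipe dreams: setting $x_i=0$ and shifting deletes row $i$ from marked pipe dreams, and we would match the survivors with pipe dreams of smaller permutations. If that fails, the fallback is to realize $\groth{w}$ as $\ct$-dual to $\cpi_w$ and express $\ktopeH{F}$ through the $\cpi_i$ and $\rope{i}$ using Lemma~\ref{lem:relations}.
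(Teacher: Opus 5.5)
Your reduction to $\beta=1$ via the grading $\deg x_i=1$, $\deg\beta=-1$ is fine and agrees with the paper, as is your reformulation of positivity as $\ct\ktopeH{F}\groth{w}\ge 0$ via \Cref{th:grove_expansions}. The first-step computation is simpler than you suggest. Since $\cpi_i=(1+\beta x_i)\partial_i$, one has $\ktopeL{i}=\rope{i}\cpi_i$ and $\ktopeR{i}=\rope{i+1}\cpi_i$; note this is not $\rope{i+1}\partial_i(1+x_i)$, which differs from $\ktopeR{i}$ by $\rope{i}$. So at $\beta=1$ both operators kill $\groth{w}$ when $i\notin\des{w}$, and otherwise give $\rope{i}(\groth{w}+\groth{ws_i})$, resp.\ $\rope{i+1}(\groth{w}+\groth{ws_i})$. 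There are no signs to track.

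The genuine gap is the step you yourself label the main obstacle. After the first operator you face $\ct\ktopeH{F/i}\rope{j}\groth{u}$, and you plan to re-expand $\rope{j}\groth{u}$ positively in Grothendieck polynomials. That is a substantial result in its own right (a $K$-theoretic analogue of the Bergeron--Sottile positivity of $\rope{j}\schub{u}$). You neither prove nor precisely cite it, and the pipe-dream sketch is not an argument, so the proof as written is incomplete.

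The missing idea, which is how the paper argues, is that no such expansion is needed: push $\rope{j}$ to the left instead. Use the identity $\ktopeH{i}^A\rope{i+1}=\rope{i}\ktopeH{i+1}^A+\rope{i+1}\ktopeH{i}^A$ from \Cref{lem:relations}, together with the commutations $\ktopeH{k}^A\rope{j}=\rope{j}\ktopeH{k+1}^A$ for $k\ge j$ and $\ktopeH{k}^A\rope{j}=\rope{j-1}\ktopeH{k}^A$ for $k\le j-2$. Then every $\rope{}$ moves past all remaining $\ktopeL{}$, $\ktopeR{}$ factors with nonnegative coefficients, and is absorbed since $\ct\rope{j}=\ct$. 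Hence, for any composite $\Phi$ of $k$ operators from $\{\ktopeL{i},\ktopeR{i}\}$, $\ct\Phi\groth{w}$ is a nonnegative combination of values $\ct\Phi'\groth{u}$ with $\Phi'$ a composite of $k-1$ such operators. Induction down to $\ct\groth{u}=\delta_{u,\idem}$ finishes; this is exactly the mechanism in the proof of \Cref{th:positivity_of_composites}.

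Your fallback does invoke \Cref{lem:relations}, but it aims the lemma at rewriting $\ktopeH{F}$ in terms of $\cpi_i$ and $\rope{}$. That lemma says nothing about moving $\rope{j}$ past $\cpi_{j-1}$, so the fallback as stated does not close the gap either.
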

\begin{proof}
    The claim follows from \Cref{th:positivity_of_composites} and the relations:
        $$\ktopeL{i}\groth{w}=\begin{cases}0&i\not\in \des{w}\\ \rope{i}\groth{w}+\rope{i}\groth{ws_i}&\text{otherwise}\end{cases}\text{ and }\ktopeR{i}\groth{w}=\begin{cases}0&i\not\in \des{w}\\ \rope{i+1}\groth{w}+\rope{i+1}\groth{ws_i}&\text{otherwise.}\end{cases}\qedhere
        $$    
\end{proof}

Our next result concerns the structure constants of multiplication for grove polynomials. This will use the Leibniz identities for $\ktopeH{i}\in \{\ktopeL{i},\ktopeR{i}\}=\{\tope{i},(1+x_i)\tope{i}\}$ that
\begin{equation}\label{eqn:Leibniz}\ktopeH{i}(fg)=\ktopeH{i}(f)\rope{i}(g)+\rope{i+1}(f)\ktopeH{i}(g).\end{equation}
\begin{cor}
    \label{cor:positiveexps}
The product $\grovepoly{F}\grovepoly{G}$ is grove-positive.
\end{cor}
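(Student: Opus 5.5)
By Theorem~\ref{th:grove_expansions}, the coefficient of $\grovepoly{H}$ in $\grovepoly{F}\grovepoly{G}$ is $\ct\ktopeH{H}(\grovepoly{F}\grovepoly{G})$. It therefore suffices to prove a stronger statement: for every composite $\Phi$ of operators $\rope{i}$, $\ktopeL{i}$, $\ktopeR{i}$ and all $F,G$,
\[
\ct\Phi(\grovepoly{F}\grovepoly{G})\ge 0 .
\]
We work with $\beta=1$ and induct on the pair consisting of the number of $\ktopeL{}/\ktopeR{}$ factors in $\Phi$ and $|F|+|G|$.

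The key step is to push the rightmost operator of $\Phi$ into the product. Suppose first that the rightmost factor is $\rope{k}$. Since $\rope{k}$ is a ring homomorphism, $\rope{k}(\grovepoly{F}\grovepoly{G})=\rope{k}(\grovepoly{F})\,\rope{k}(\grovepoly{G})$. By Theorem~\ref{th:positivity_of_composites}, each factor is a nonnegative combination of grove polynomials. Expanding the product gives a nonnegative combination of products $\grovepoly{F'}\grovepoly{G'}$, to which the shorter composite is applied.

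For the rightmost factor we could alternatively straighten $\rope{k}$ to the left using Lemma~\ref{lem:relations} and absorb it into $\ct$, exactly as in the proof of Theorem~\ref{th:positivity_of_composites}. This option only rewrites $\Phi$ and does not touch the product. To keep the induction clean, I would first normalize $\Phi$ so that all $\rope{}$ factors sit on the left. This is possible by Corollary~\ref{cor:Straightencor} together with the relations of Lemma~\ref{lem:relations}, all of which have nonnegative coefficients. After this normalization every $\rope{}$ factor is absorbed into $\ct$, and $\Phi$ is a composite of $\ktopeL{}$'s and $\ktopeR{}$'s only.

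Now suppose the rightmost factor is $\ktopeH{i}\in\{\ktopeL{i},\ktopeR{i}\}$. The Leibniz identity~\eqref{eqn:Leibniz} gives
\[
\ktopeH{i}(\grovepoly{F}\grovepoly{G})=\ktopeH{i}(\grovepoly{F})\,\rope{i}(\grovepoly{G})+\rope{i+1}(\grovepoly{F})\,\ktopeH{i}(\grovepoly{G}).
\]
By~\eqref{eq:to_be_used_later} and Theorem~\ref{th:positivity_of_composites}, each of the four factors on the right is a nonnegative combination of grove polynomials. Hence the whole expression is a nonnegative combination of products $\grovepoly{F'}\grovepoly{G'}$, and $\Phi$ has lost one $\ktopeL{}/\ktopeR{}$ factor. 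If $\rope{}$ factors are reintroduced afterwards, we handle them by the ring-homomorphism step or by re-straightening.

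The induction terminates because each $\ktopeL{}/\ktopeR{}$ factor is eliminated in turn, while $\rope{}$ steps never increase that count. At the end we are left with $\ct(\grovepoly{F'}\grovepoly{G'})=\delta_{F',\emptyset}\delta_{G',\emptyset}\ge 0$.

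The main obstacle is bookkeeping rather than a new idea. One must check that interleaving the two kinds of steps terminates and that no negative coefficient ever enters. The cleanest route is to induct only on the number of $\ktopeL{}/\ktopeR{}$ factors in $\Phi$, noting that $\rope{}$ factors are harmless. They can always be applied directly to the product as a ring homomorphism, and by Theorem~\ref{th:positivity_of_composites} this preserves grove-positivity of the two factors separately.
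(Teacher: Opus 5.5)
Your proof is correct and is essentially the paper's argument: reduce via Theorem~\ref{th:grove_expansions} to showing $\ct\ktopeH{H}(\grovepoly{F}\grovepoly{G})\ge 0$, peel off one $\ktopeH{i}$ with the Leibniz identity~\eqref{eqn:Leibniz}, expand each resulting factor positively using \eqref{eq:to_be_used_later} and Theorem~\ref{th:positivity_of_composites}, and induct. The paper simply inducts on $|H|$ using $\ktopeH{H}=\ktopeH{H/i}\ktopeH{i}$ for $i\in\qdes{H}$, so peeling never leaves the family of grove extractors; your extension to arbitrary composites $\Phi$, and the extra handling of $\rope{}$ factors (by normalization or by the ring-homomorphism step), is unnecessary though harmless, provided you induct on the length of $\Phi$ and not on $|F|+|G|$, which need not decrease.
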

\begin{proof}
         We want to show that $\ct\ktopeH{H}(\grovepoly{F}\grovepoly{G})\ge 0$. 
         To this end we induct on $|H|$ with the case $|H|=0$ being trivial. 
         For $i\in \qdes{H}$ we have $\ktopeH{H}=\ktopeH{H/i}\ktopeH{i}$ with $\ktopeH{i}\in \{\ktopeL{i},\ktopeR{i}\}$, so by \eqref{eqn:Leibniz} we have
        $$\ct\ktopeH{H}(\grovepoly{F}\grovepoly{G})=\ct\ktopeH{H/i}((\ktopeH{i}\grovepoly{F})(\rope{i}\grovepoly{G})+(\rope{i+1}\grovepoly{F})(\ktopeH{i}\grovepoly{G}))$$
        and the result now follows by Theorem~\ref{th:positivity_of_composites} and induction on $|H|$.
\end{proof}

\begin{rem}\label{rem:necessary}
    Recall that Lenart \cite{Le99} gave a positive expansion for the Grothendieck polynomials in the Schubert basis (the expansion ibid. is signed as $\beta$ is set to $-1$). 
    We can show analogously that grove polynomials expand positively in terms of forest polynomials by using the extractors from \cite[Section 6]{NST_1}. 
    In the opposite direction, Lascoux \cite{La04} established that the expansion of a Schubert polynomial in terms of Grothendieck polynomials is sign-alternating; see also \cite{Wei25} for a uniform perspective on changing bases in terms of bumpless pipe dreams. 
    Empirical evidence suggests a similar phenomena persists when expanding forest polynomials in terms of grove polynomials. We leave it as an open question.
\end{rem}

\section{$K$-theory}

In this section we set $\beta=-1$. Recall that if $B\subset \gl{n}$ are the upper triangular matrices, then $\fl{n}\coloneqq \gl{n}/B$ is the \emph{complete flag variety}, parametrizing complete flags of subspaces
$$\{0\subsetneq V_1\subsetneq \cdots \subsetneq V_{n-1}\subsetneq \mathbb{C}^n\}.$$
The $K$-theory $K^\bullet(\gl{n}/B)$ and cohomology ring $H^\bullet(\gl{n}/B)$ both have presentations \cite{Bor53, Dem74} as the symmetric coinvariants 
$$\mathbb{Z}[x_1,\ldots,x_n]/\langle f(x_1,\ldots,x_n)-f(0,\ldots,0)\suchthat f\in \mathbb{Z}[x_1,\ldots,x_n]^{S_n}\rangle.$$
where in $K$-theory $x_i$ is interpreted as $1-[\mathcal{F}_i/\mathcal{F}_{i-1}]$ and in cohomology $x_i$ is interpreted as $c_1(\mathcal{F}_i/\mathcal{F}_{i-1})$, where $\mathcal{F}_i$ is the tautological quotient flag. Schubert cycles $X^w=\overline{BwB/B}$ are the closures of Schubert cells $\mathring{X}^w=BwB/B$ in the Bruhat decomposition of the complete flag variety
$$\gl{n}/B=\bigsqcup_{w\in W}\mathring{X}^w\text{ with }\mathring{X}^w=BwB/B\cong \mathbb{A}^{\ell(w)}.$$
The Bruhat decomposition relates to $\partial_w$ and $\cpi_w$ via the identities 
$$\deg_{\gl{n}/B}(f\cdot [\overline{BwB}])=\ct\partial_w f\text{ and }\chi_{\gl{n}/B}(f\cdot [\mathcal{O}_{BwB}])=\ct \cpi_w^{(-1)} f=1,$$
where $\chi$ is the Euler characteristic. In particular, the Schubert $\schub{w}$ and Grothendieck specializations $\groth{w}^{(-1)}$ are Kronecker dual to the homology classes $X^w$ and structure sheaves $\mathcal{O}_{\mathring{X}^w}$ respectively. 

In \cite{BGNST2} we geometrized forest polynomials via the \emph{quasisymmetric flag variety} $\qfl_n\subset \gl{n}/B$. 
This has an affine paving as described in \cite[Section 9]{BGNST2}
$$\qfl_n=\bigsqcup_{F\in \indfor_n}\mathring{X}(F),$$
by \emph{quasisymmetric Schubert cells}
whose closures were called the \emph{quasisymmetric Schubert cycles} $X(F)$. 
There is a surjection $H^\bullet(\gl{n}/B)\to H^\bullet(\qfl_n)$ realizing $H^\bullet(\qfl_n)$ as the ring of quasisymmetric coinvariants
$$\mathbb{Z}[x_1,\ldots,x_n]/\langle f(x_1,\ldots,x_n)-f(0,\ldots,0)\suchthat f\in \qsym{n}\rangle,$$
and we have
$\deg_{\qfl_n}(f\cdot X(F))=\ct\tope{F} f,$
so the forest polynomials are Kronecker dual to the quasisymmetric Schubert cycles.

We now show that the grove polynomials are $K$-theoretically dual to the quasisymmetric Schubert cells $\mathring{X}(F)$. 
To do this, we recall the recursive construction for $\mathring{X}(F)$ from \cite[Section 4]{BGNST2}.
\begin{fact}
    Let $p_i:\gl{m}/B\to \gl{m}/P_i$ be the projection which forgets the $i$'th flag. Then for $f\in K^\bullet(\gl{m}/B)$ we have $$p_i^*p_{i,*}(f)=\pi_i^{(-1)} f.$$
\end{fact}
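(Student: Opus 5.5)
The plan is to reduce the Fact to a local computation on a $\mathbb{P}^1$-bundle. The map $p_i:\gl{m}/B\to\gl{m}/P_i$ is the projectivization $\mathbb{P}(E)$ of the rank-two bundle $E=\mathcal{V}_{i+1}/\mathcal{V}_{i-1}$ of tautological subquotients on $\gl{m}/P_i$. The flag component $V_i$ corresponds to the tautological line $\mathcal{O}(-1)\subset p_i^*E$.

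First, by the Borel presentation, $K^\bullet(\gl{m}/B)$ is generated as a $p_i^*K^\bullet(\gl{m}/P_i)$-module by $1$ and $L_i$. Here write $L_j=[\mathcal{F}_j/\mathcal{F}_{j-1}]=1-x_j$ for the line bundles in the conventions of the paper. Moreover $p_i^*K^\bullet(\gl{m}/P_i)$ is the $s_i$-invariant part, i.e. the classes symmetric in $x_i,x_{i+1}$.

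Both sides of the claimed identity are $p_i^*K^\bullet(\gl{m}/P_i)$-linear. For $p_i^*p_{i,*}$ this is the projection formula. For $\pi_i^{(-1)}=\partial_i(1-x_{i+1})$ it holds because $\partial_i$ is linear over $s_i$-invariant polynomials, and multiplication by $(1-x_{i+1})$ commutes with such polynomials. Hence it suffices to check the identity on $f=1$ and on $f=L_i$ (equivalently $f=x_i$, or $f=x_{i+1}$).

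Next I compute the pushforward along a $\mathbb{P}^1$-bundle using the standard formulas: $p_*\mathcal{O}=\mathcal{O}$, $p_*\mathcal{O}(-1)=0$, and more generally $p_*\mathcal{O}(1)=E^\vee$. I identify $\mathcal{O}(-1)$ with the quotient line $\mathcal{F}_i/\mathcal{F}_{i-1}$ or its dual, depending on whether the $\mathcal{F}$ are quotient or sub bundles, so that $L_i$ and $L_{i+1}$ are the two Chern roots of the pulled-back rank-two bundle. This gives $p_i^*p_{i,*}1=1$ and $p_i^*p_{i,*}(L_{i+1}L_i^{-1})=0$ (or the analogue with $i$ and $i+1$ swapped). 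Equivalently, one can use the Weyl-character (Demazure) formula: $p_*$ sends $e^\lambda$ to $\frac{e^\lambda-e^{s\lambda-\alpha}}{1-e^{-\alpha}}$.

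On the algebraic side, one checks $\pi_i^{(-1)}1=\partial_i(1-x_{i+1})=1$. Also $\pi_i^{(-1)}$ kills $(1-x_{i+1})^{-1}\cdot$(symmetric), and it sends $1-x_{i}$ (suitably normalized) to the correct class. Concretely, $\pi_i^{(-1)}f=\frac{L_{i+1}f-L_i\,s_if}{x_i-x_{i+1}}$ with $x_i-x_{i+1}=L_{i+1}-L_i$, and this is exactly the Demazure operator $\frac{L_{i+1}f-L_i s_if}{L_{i+1}-L_i}$. The latter is the classical formula for $p^*p_*$ on a $\mathbb{P}^1$-bundle whose relative tangent bundle is $L_i^{-1}L_{i+1}$ or its inverse. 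Holomorphic Lefschetz on fibres, or the computation on $1$ and on a line bundle of degree $-1$, pins this down.

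The main obstacle is the sign/dualization bookkeeping: matching which of $L_i,L_{i+1}$ is $\mathcal{O}(-1)$ under the paper's convention $x_i=1-[\mathcal{F}_i/\mathcal{F}_{i-1}]$ with $\mathcal{F}$ the quotient flag. The right conventions are exactly those making $\pi_i^{(-1)}$ (rather than its $s_i$-conjugate $\partial_i(1-x_i)$) appear. I would fix this by testing on $m=2$, where $\gl{2}/B=\mathbb{P}^1$. There $\chi(\mathcal{O})=1$ must equal $\ct\pi_1^{(-1)}1=1$, and the class of a point, $x_1$, must push forward to $1$: indeed $\pi_1^{(-1)}x_1=\partial_1(x_1-x_1x_2)=1$. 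This selects the correct orientation, after which the general case follows by the module-generation argument above.
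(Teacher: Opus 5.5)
The paper states this as a standard Fact and gives no proof, so your argument supplies what the paper takes for granted. Your reduction is the right one. By the projection formula and the projective bundle theorem, both sides are linear over $p_i^*K^\bullet(\gl{m}/P_i)$, whose classes are represented by $s_i$-symmetric polynomials (over which $\partial_i$ is linear). So it suffices to check $f=1$ and $f=L_i$.

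\textbf{The gap.} The second check, as you wrote it, is false: $p_i^*p_{i,*}(L_{i+1}L_i^{-1})$ is not $0$ in either orientation.
\begin{itemize}
\item If $L_i$ is the tautological line, then $L_i^{-1}L_{i+1}$ is the relative tangent bundle. It has degree $2$ on the fibres, and its pushforward has rank $3$. Algebraically, $\pi_i^{(-1)}(L_{i+1}L_i^{-1})=L_iL_{i+1}^{-1}+1+L_{i+1}L_i^{-1}$.
\item In the opposite orientation, $L_{i+1}L_i^{-1}$ is the relative dualizing sheaf, whose pushforward is $-1$.
\end{itemize}
So this computation verifies nothing, and $L_{i+1}L_i^{-1}$ is not one of the generators you reduced to anyway.

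\textbf{The fix.} Apply the vanishing you already listed, $p_*\mathcal{O}(-1)=0$, to $L_i$ itself, and read the orientation off the geometry rather than testing for it. Take $\mathcal{F}_\bullet$ to be the tautological subbundle flag. Then the fibre of $p_i$ is $\mathbb{P}(\mathcal{F}_{i+1}/\mathcal{F}_{i-1})$, and $\mathcal{F}_i/\mathcal{F}_{i-1}$ is its tautological line $\mathcal{O}(-1)$. Whatever convention is meant by ``quotient flag'', the class $1-x_i$ must be this $\mathcal{O}(-1)$, not its dual. With that identification:
\begin{itemize}
\item geometrically, $p_i^*p_{i,*}1=1$ and $p_i^*p_{i,*}L_i=0$;
\item algebraically, $\pi_i^{(-1)}1=\partial_i(1-x_{i+1})=1$, and $\pi_i^{(-1)}L_i=\partial_i\bigl((1-x_i)(1-x_{i+1})\bigr)=0$ because the argument is $s_i$-symmetric.
\end{itemize}
Together with your linearity reduction, this completes the proof. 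This is exactly the convention in which $x_1=1-[\mathcal{O}(-1)]=[\mathcal{O}_{pt}]$ on $\mathbb{P}^1$. So your $m=2$ test is a useful consistency check on the paper's convention, but it does not replace the identification.

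\textbf{A minor point.} The $s_i$-conjugate of $\pi_i^{(-1)}$ is $-\partial_i(1-x_i)$, not $\partial_i(1-x_i)$; the latter sends $1$ to $-1$.
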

We recall how this implies that $\chi_{\gl{n}/B}(f\cdot [\mathcal{O}_{\mathring{X}^w}])=\ct \cpi_wf.$ The base case is that for $w=\idem$ we have $BwB$ is a single point and $\chi(f\cdot \mathcal{O}_{\{pt\}}))=\ct f$. 
The key fact which we can apply recursively is that in $K^\bullet(\gl{n}/B)$ we have $$p_i^*p_{i,*}[\mathcal{O}_{\mathring{X}^w}]=\begin{cases} [\mathcal{O}_{\mathring{X}^w}]+[\mathcal{O}_{\mathring{X}^{ws_i}}]&\ell(ws_i)>\ell(w)\\
0&\ell(ws_i)<\ell(w),
\end{cases}$$
(see \cite[Lemma 3.8(a)]{ALSS}).
This implies that if $\ell(ws_i)>\ell(w)$ then $[\mathcal{O}_{\mathring{X}^{ws_i}}]=(p_i^*p_{i,*}-\idem)[\mathcal{O}_{\mathring{X}^w}]$ and so 
$$\chi(f\cdot [\mathcal{O}_{\mathring{X}^{ws_i}}])= \chi((\cpi_i^{(-1)}f)\cdot [\mathcal{O}_{\mathring{X}^w}]).$$

We now recall certain pattern maps introduced by Bergeron--Sottile \cite{BS98}; these were crucial to the construction of $\qfl_n$ \cite[Section 3]{BGNST2}.
\begin{defn}
    Define maps $\Psi_i^{\pm}: \gl{n}/B\hookrightarrow \gl{n+1}/B$ by the inclusion of matrices
    \begin{align*}
    \begin{bmatrix}A_{(i-1)\times (i-1)}&B_{(i-1)\times n-(i-1)}\\C_{n-(i-1)\times (i-1)}&D_{n-(i-1)\times n-(i-1)}\end{bmatrix}\xrightarrow{\Psi_i^{-}} \begin{bmatrix}A_{(i-1)\times (i-1)}&0&B_{(i-1)\times n-(i-1)}\\0&1&0\\
    C_{n-(i-1)\times (i-1)}&0&D_{n-(i-1)\times n-(i-1)}\end{bmatrix}\quad  (1\le i \le n)\\
    \begin{bmatrix}A_{(i-1)\times i}&B_{(i-1)\times n-i}\\C_{n-(i-1)\times i}&D_{n-(i-1)\times n-i}\end{bmatrix}\xrightarrow{\Psi_i^+} \begin{bmatrix}A_{(i-1)\times i}&0&B_{(i-1)\times n-i}\\0&1&0\\
    C_{n-(i-1)\times i}&0&D_{n-(i-1)\times n-i}\end{bmatrix}\quad  (1\le i \le n-1).
    \end{align*}
    \end{defn}
\begin{fact}[{\cite[Proposition 4.3]{LRS}}]
    The homological operations $(\Psi_i^{-})_*$ and $(\Psi_i^+)_*$ are adjoint to the cohomological operations $\rope{i},\rope{i+1}$ in the sense that
    $$\chi_{\gl{m+1}/B}((\Psi_i^-)_*f)=\rope{i}\chi_{\gl{m}/B}(f)\text{ and }\chi_{\gl{m+1}/B}((\Psi_i^+)_*f)=\rope{i+1}\chi_{\gl{m}/B}(f).$$
\end{fact}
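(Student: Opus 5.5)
The plan is to read the statement as an identity of functionals on $K^\bullet$. For a class $f\in K_\bullet(\gl{m}/B)$, the symbol $\chi_{\gl{m}/B}(f)$ denotes the functional $g\mapsto \chi_{\gl{m}/B}(g\cdot f)$ on $\mathbb{Z}[x_1,\ldots,x_m]$, taken through the Borel presentation. With this reading, the claim for $\Psi_i^-$ says
$$\chi_{\gl{m+1}/B}\bigl(g\cdot(\Psi_i^-)_*f\bigr)=\chi_{\gl{m}/B}\bigl((\rope{i}g)\cdot f\bigr)$$
for every $g\in\mathbb{Z}[x_1,\ldots,x_{m+1}]$. The claim for $\Psi_i^+$ is the same identity with $\rope{i+1}$ in place of $\rope{i}$.

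The first step is the projection formula for the proper map $\Psi=\Psi_i^{\pm}$, which gives $\chi_{\gl{m+1}/B}(g\cdot\Psi_*f)=\chi_{\gl{m}/B}(\Psi^*g\cdot f)$. This reduces everything to showing that the ring map $\Psi^*:K^\bullet(\gl{m+1}/B)\to K^\bullet(\gl{m}/B)$ is induced by $\rope{i}$ (respectively $\rope{i+1}$) on the polynomial presentations. As a consistency check, $\rope{i}$ sends a symmetric polynomial in $m+1$ variables to a symmetric polynomial in $m$ variables with the same constant term. So it does preserve the defining ideals, and $\Psi^*$ is determined by its values on the generators $x_j$.

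The second step is to compute $\Psi^*x_j$ by pulling back the tautological flag.
\begin{itemize}
\item Let $\iota:\mathbb{C}^m\to\mathbb{C}^{m+1}$ be the coordinate inclusion that skips the $i$-th coordinate.
\item Read off the columns of $\Psi_i^-(M)$. The image flag $V'_\bullet$ satisfies $V'_j=\iota(V_j)$ for $j<i$, and $V'_j=\iota(V_{j-1})\oplus\mathbb{C}e_i$ for $j\ge i$.
\item Hence the successive subquotients are $V'_j/V'_{j-1}\cong V_j/V_{j-1}$ for $j<i$, the trivial line $\mathbb{C}e_i$ for $j=i$, and $V_{j-1}/V_{j-2}$ for $j>i$.
\item Since $\mathbb{C}^{m+1}=\iota(\mathbb{C}^m)\oplus\mathbb{C}e_i$ compatibly with both flags, the same pattern holds for the tautological quotient flag used to define the $x_j$.
\item Therefore $\Psi^*x_j=x_j$ for $j<i$, $\Psi^*x_i=1-[\mathcal{O}]=0$, and $\Psi^*x_j=x_{j-1}$ for $j>i$. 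This is exactly $\rope{i}$.
\item For $\Psi_i^+$, the first $i$ columns are kept and $e_i$ is inserted as column $i+1$. The trivial line now appears in position $i+1$, giving $\rope{i+1}$.
\end{itemize}

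I expect the main obstacle to be the bookkeeping in this second step. One must make sure the trivial summand really splits off as a trivial line bundle, and not merely up to a nontrivial extension. This is where the direct-sum decomposition $\mathbb{C}^{m+1}=\iota(\mathbb{C}^m)\oplus\mathbb{C}e_i$ being respected by the whole flag matters. One must also check that the index shift is correct under the paper's quotient-flag convention for $x_j$, as opposed to a sub-flag convention. I would handle both points by working with the explicit block form of $\Psi_i^{\pm}$: the row and column of index $i$ (respectively $i+1$) are exactly $e_i$, so the new flag is a direct sum of the old flag shifted and a constant line. Everything else is formal from the projection formula.
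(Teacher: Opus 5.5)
Your proof is correct. The paper gives no argument of its own and only cites \cite[Proposition 4.3]{LRS}; your route is the standard proof: the projection formula gives $\chi(g\cdot(\Psi_i^{\pm})_*f)=\chi((\Psi_i^{\pm})^*g\cdot f)$, and $(\Psi_i^{\pm})^*$ acts on the Borel presentation by $\rope{i}$, resp.\ $\rope{i+1}$, because the pulled-back flag has the trivial subquotient $\mathcal{O}e_i$ in position $i$, resp.\ $i+1$.

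Your two worries are harmless. That subquotient is canonically $\mathcal{O}e_i$, so no extension issue arises. The non-reversed indexing of the $x_j$ is the one forced by the paper's other normalizations, for example $p_i^*p_{i,*}=\pi_i^{(-1)}$ with $p_i$ forgetting $V_i$. Finally, whether $x_j$ uses a line bundle or its dual does not matter, since the trivial bundle is self-dual.
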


As we showed in~\cite[Section 3]{BGNST2}, the maps $\Psi_i^{\pm}$ are closed immersions 
and $p_{i,*}$ is an isomorphism when restricted to the image of $\Psi_i^-$. Therefore, for any $Z\subset \gl{m}/B$,
$$
p_{i}^*p_{i,*}(\Psi_i^-)_*[\mathcal{O}_Z]=p_{i}^*[\mathcal{O}_{p_i\Psi_i^- Z}]
=[\mathcal{O}_{p_i^{-1}p_i\Psi_i^- Z}].
$$
By inclusion-exclusion, this identity extends from closed subvarieties to arbitrary constructible sets. 
The cells $\mathring{X}(F)$ are recursively defined by
$$
\mathring{X}(F)=\begin{cases}
\bigl(p_i^{-1}p_i\Psi_i^- \mathring{X}({F/i})\bigr) \setminus \bigl(\Psi_i^+\mathring{X}(F)\bigr)
  & i\text{ is a right child,}\\[4pt]
\bigl(p_i^{-1}p_i\Psi_i^- \mathring{X}({F/i})\bigr) \setminus \bigl(\Psi_i^-\mathring{X}(F)\bigr)
  & \text{otherwise.}
\end{cases}
$$
For $i\in \qdes{F}$, we immediately deduce
$$
\chi\bigl(f\cdot [\mathcal{O}_{\mathring{X}(F)}]\bigr)=\begin{cases}
\chi\bigl((\ktopeR{i,\beta=-1}f)\cdot [\mathcal{O}_{\mathring{X}(F/i)}]\bigr)
  & i\text{ is a right child,}\\[4pt]
\chi\bigl((\ktopeL{i,\beta=-1}f)\cdot [\mathcal{O}_{\mathring{X}(F/i)}]\bigr)
  & \text{otherwise.}
\end{cases}
$$
Applying this recursion and using the definition of $\ktopeH{F}$ yields the following.

\begin{thm}
For any $f\in K^\bullet(\qfl_n)$ and $F\in\indfor_n$,
$$\chi_{\qfl_n}\bigl(f\cdot [\mathcal{O}_{\mathring{X}(F)}]\bigr)=\ct \ktopeH{F,\beta=-1}f.$$
In particular, the grove polynomials $\grovepoly{F}^{(-1)}$ are dual to the structure sheaves of the quasisymmetric Schubert cells.
\end{thm}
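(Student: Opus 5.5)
We argue by induction on $|F|$, using the recursion displayed just before the statement. When $F=\emptyset$, the cell $\mathring{X}(\emptyset)$ is the single torus-fixed point of $\qfl_n$ (the base flag). For any $f$ we then have $\chi(f\cdot[\mathcal{O}_{pt}])=\ct f$, because restricting $K$-theory to a point evaluates each $x_i=1-[\mathcal{F}_i/\mathcal{F}_{i-1}]$ at a trivial line bundle, giving $0$. This agrees with $\ct\ktopeH{\emptyset}f=\ct f$.

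For $F\neq\emptyset$, choose $i\in\qdes{F}$. The displayed recursion gives $\chi(f\cdot[\mathcal{O}_{\mathring{X}(F)}])=\chi\bigl((\ktopeH{i}f)\cdot[\mathcal{O}_{\mathring{X}(F/i)}]\bigr)$, where $\ktopeH{i}=\ktopeR{i}$ if $i$ is a right child and $\ktopeH{i}=\ktopeL{i}$ otherwise, at $\beta=-1$. By induction the right-hand side equals $\ct\ktopeH{F/i}\ktopeH{i}f$, which is $\ct\ktopeH{F}f$ by the recursive definition of the grove extractor.

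Two points need care here. First, we must check that the operators are well defined on $K^\bullet(\qfl_n)$, i.e. that they do not depend on the polynomial representative of $f$. For this we regard $f$ as pulled back from $K^\bullet(\gl{n}/B)$ and compute all Euler characteristics on the ambient flag varieties, where the pushforwards $(\Psi_i^{\pm})_*$ and the operator $p_i^*p_{i,*}$ live. The recursion passes through $\gl{n+1}/B$ and then returns to $\mathbb{Z}[x_1,\ldots,x_n]$ after applying $\rope{i}$ or $\rope{i+1}$. The resulting quantity is intrinsically a pairing with a class supported on $\qfl_n$, so it is automatically independent of the representative. Second, the answer must not depend on the choice of $i\in\qdes{F}$. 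This follows because $\ktopeH{F}$ is well defined, which is guaranteed by the marked-forest calculus.

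The main obstacle is the derivation of the displayed recursion itself, specifically the subtraction step. Write $Y=p_i^{-1}p_i\Psi_i^-\mathring{X}(F/i)$. Then
$$[\mathcal{O}_{\mathring{X}(F)}]=[\mathcal{O}_{Y}]-(\Psi_i^{\pm})_*[\mathcal{O}_{\mathring{X}(F)}],$$
with the sign choice corresponding to the right-child or other case. This decomposition uses additivity of $K$-classes of structure sheaves on constructible decompositions, which is where the inclusion–exclusion remark is used. Pairing with $f$ and applying the adjunction Fact, the first term becomes $\chi\bigl((\pi_i^{(-1)}f)\cdot[\mathcal{O}_{\mathring{X}(F/i)}]\bigr)$ after pulling back along $\Psi_i^-$. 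The second term becomes $\chi\bigl(f\cdot(\Psi_i^{\pm})_*[\ldots]\bigr)$, and the Fact converts it into a $\rope{i}$ or $\rope{i+1}$ term. Combining the two with $\ktope{i}=\rope{i}\pi_i=\rope{i+1}\pi_i$ gives $\ktope{i}-\rope{i}=\ktopeL{i}$ or $\ktope{i}-\rope{i+1}=\ktopeR{i}$ at $\beta=-1$.

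The second term in that decomposition appears self-referential, since $\mathring{X}(F)$ occurs on both sides. I would resolve this by reading $\Psi_i^{\pm}\mathring{X}(F)$ as the copy of the cell for $F$ inside the smaller flag variety, so that the induction also runs over $n$ (equivalently, $\Psi$ embeds $\qfl_{n-1}$-cells), and I would verify this against \cite[Section 4]{BGNST2}.

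Finally, for the duality claim, combine the identity just proved with Theorem~\ref{thm:groveextractors} at $\beta=-1$. This gives $\chi(\grovepoly{G}^{(-1)}\cdot[\mathcal{O}_{\mathring{X}(F)}])=\delta_{F,G}$. Since the cells pave $\qfl_n$, their structure sheaves form a basis of $K^\bullet(\qfl_n)$, so the images of $\grovepoly{G}^{(-1)}$ for $G\in\indfor_n$ form the dual basis.
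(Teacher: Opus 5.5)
Your strategy is the paper's. You induct on $|F|$, peel off some $i\in\qdes{F}$ using the recursion for $\chi(f\cdot[\mathcal{O}_{\mathring{X}(F)}])$, and identify $\ct\ktopeH{F/i}\ktopeH{i}f$ with $\ct\ktopeH{F}f$ through the recursive definition of the grove extractor. Your base case and the final duality step are also fine.

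The one step that fails as you propose to carry it out is your resolution of the self-referential term. Reading $\Psi_i^{\pm}\mathring{X}(F)$ as ``the cell for $F$ in a smaller flag variety'' cannot be right, for three reasons:
\begin{itemize}
\item $F$ need not lie in $\indfor_{n-1}$, so such a cell need not exist.
\item Such a set would be $|F|$-dimensional. Here $Y=p_i^{-1}p_i\Psi_i^-\mathring{X}(F/i)$ is an $|F|$-dimensional $\mathbb{P}^1$-bundle over $p_iY\cong\mathring{X}(F/i)$. To leave an $\mathbb{A}^1$-bundle over $\mathring{X}(F/i)$, you must remove a section, which has dimension $|F|-1$.
\item The subtracted term would pair $f$ with a class other than $[\mathcal{O}_{\mathring{X}(F/i)}]$. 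It then could not be merged with the first term into a single operator applied to $f$.
\end{itemize}

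The displayed definition should be read with $\mathring{X}(F/i)$ inside $\Psi_i^{\pm}$. One removes $\Psi_i^{+}\mathring{X}(F/i)$ if $i$ is a right child, and $\Psi_i^{-}\mathring{X}(F/i)$ otherwise.
\begin{itemize}
\item Both pieces lie in $Y$. This is trivial for $\Psi_i^-$. For $\Psi_i^+$ it holds because $p_i\Psi_i^+=p_i\Psi_i^-$: the two embeddings insert the new basis vector just before or just after the old $i$th column, which changes only the forgotten $i$th step of the flag.
\item Since $p_i$ restricts to an isomorphism on the image of $\Psi_i^-$, it does so on the image of $\Psi_i^+$ as well. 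Hence the removed piece is a section of $Y\to p_iY$.
\item Additivity then gives $[\mathcal{O}_{\mathring{X}(F)}]=p_i^*p_{i,*}(\Psi_i^-)_*[\mathcal{O}_{\mathring{X}(F/i)}]-(\Psi_i^{\pm})_*[\mathcal{O}_{\mathring{X}(F/i)}]$.
\item Pair with $f$, using self-adjointness of $p_i^*p_{i,*}$ for $\chi$ (projection formula) and the adjunction Fact. This yields $\chi\bigl((\rope{i}\pi_i^{(-1)}f-\rope{i+1}f)\cdot[\mathcal{O}_{\mathring{X}(F/i)}]\bigr)$ in the right-child case and $\chi\bigl((\rope{i}\pi_i^{(-1)}f-\rope{i}f)\cdot[\mathcal{O}_{\mathring{X}(F/i)}]\bigr)$ otherwise. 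These are $\ktopeR{i}$ and $\ktopeL{i}$ at $\beta=-1$.
\end{itemize}

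This is exactly what your ``$[\ldots]$'' has to be for your combining step to work, so the repair is local. The passage to $\qfl_{n-1}$ in the induction is automatic, since $F/i\in\indfor_{n-1}$.
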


As an immediate corollary, we obtain the $K$-theoretic expansion of structure sheaves of quasisymmetric Schubert cells in terms of those of classical Schubert cells.

\begin{cor}
In $K^\bullet(\gl{n}/B)$,
$$[\mathcal{O}_{\mathring{X}(F)}]=\sum_{w\in S_{n}}(-1)^{|F|-\ell(w)}a^F_w
[\mathcal{O}_{\mathring{X}^w}],$$
where $a^F_w$ are the coefficients from \Cref{cor:positiveexps_grothendieck}.
\end{cor}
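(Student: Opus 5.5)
The plan is to read off the coefficients of $[\mathcal{O}_{\mathring{X}(F)}]$ in the basis $\{[\mathcal{O}_{\mathring{X}^w}]\}_{w\in S_n}$ of $K^\bullet(\gl{n}/B)$ by pairing against the dual basis. Here $[\mathcal{O}_{\mathring{X}(F)}]$ denotes the pushforward along the inclusion $\qfl_n\hookrightarrow\gl{n}/B$.

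First, the Schubert cell classes $[\mathcal{O}_{\mathring{X}^w}]$ form a $\mathbb{Z}$-basis of $K^\bullet(\gl{n}/B)$, because of the affine paving. By the duality recalled above,
$$\chi_{\gl{n}/B}\bigl(\groth{v}^{(-1)}\cdot[\mathcal{O}_{\mathring{X}^w}]\bigr)=\ct\cpi_w^{(-1)}\groth{v}^{(-1)}=\delta_{v,w}.$$
Hence for any class $c$ we have
$$c=\sum_{w\in S_n}\chi\bigl(\groth{w}^{(-1)}\cdot c\bigr)\,[\mathcal{O}_{\mathring{X}^w}].$$
I apply this with $c=[\mathcal{O}_{\mathring{X}(F)}]$. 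By the projection formula, $\chi_{\gl{n}/B}(\groth{w}^{(-1)}\cdot c)=\chi_{\qfl_n}(\groth{w}^{(-1)}|_{\qfl_n}\cdot[\mathcal{O}_{\mathring{X}(F)}])$. By the preceding theorem this equals $\ct\ktopeH{F,\beta=-1}\groth{w}^{(-1)}$.

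Second, I expand $\groth{w}^{(-1)}$ using \Cref{cor:positiveexps_grothendieck} at $\beta=-1$:
$$\groth{w}^{(-1)}=\sum_G(-1)^{|G|-\ell(w)}a^G_w\,\grovepoly{G}^{(-1)}.$$
Then I apply \Cref{thm:groveextractors} at $\beta=-1$, which is valid for arbitrary $\beta$ by the same argument. This gives $\ct\ktopeH{F}\groth{w}^{(-1)}=(-1)^{|F|-\ell(w)}a^F_w$, which is the claimed coefficient.

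The main obstacle is a point of bookkeeping rather than a hard step. \Cref{cor:positiveexps_grothendieck} is an identity in $\ZZ[\xl]$ whose sum may involve forests $G\notin\indfor_n$. I need the extractor identity $\ct\ktopeH{F}\grovepoly{G}=\delta_{F,G}$ to hold at the polynomial level, so that no quotient to coinvariants is required before extracting. Since $F\in\indfor_n$, only the $G=F$ term survives. I also need that using the representative $\groth{w}^{(-1)}$, a polynomial in $x_1,\dots,x_n$, is legitimate in the preceding theorem. This holds because $\chi$ depends only on its restriction to $\qfl_n$, and that theorem is stated for all such $f$.
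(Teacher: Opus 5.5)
Your proposal is correct and follows the paper's own route. You pair $[\mathcal{O}_{\mathring{X}(F)}]$ against the dual functionals $\chi(\groth{w}^{(-1)}\cdot -)$, identify the pairing with $\ct\ktopeH{F,\beta=-1}\groth{w}^{(-1)}$ via the preceding theorem, and read off the coefficient $(-1)^{|F|-\ell(w)}a^F_w$ from the grove expansion and extractor duality. Your extra remarks (the projection formula for the pushforward from $\qfl_n$, and that the extractor identity holds in $\ZZ[\xl]$ so forests outside $\indfor_n$ cause no trouble) only make explicit what the paper leaves implicit.
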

\begin{proof}
The classes $[\mathcal{O}_{\mathring{X}^w}]$ form a basis of $K^\bullet(\gl{n}/B)$ dual to the functionals $X\mapsto \chi(\groth{w}^{(-1)}\cdot X)$. 
The coefficient of $[\mathcal{O}_{\mathring{X}^w}]$ in $[\mathcal{O}_{\mathring{X}(F)}]$ is therefore 
$\ct\ktopeH{F,\beta=-1}\groth{w}^{(-1)}$, which by the theorem equals the coefficient $(-1)^{|F|-\ell(w)}a^F_w$ of $\grovepoly{F}^{(-1)}$ in the expansion of 
$\groth{w}^{(-1)}$ into $\beta=-1$ grove polynomials.
\end{proof}

\section{Multi-fundamental polynomials}
\label{sec:multifundamentals}

Recall that a permutation $w\in S_{\infty}$ is $n$-Grassmannian if $\des{w}\subset\{n\}$. Such a permutation uniquely determines a partition $\lambda(w)$ with at most $n$ parts by way of its Lehmer code $(c_i)_{i\geq 0}$ where $c_i=|\{j>i\suchthat w(i)>w(j)\}|$.
The subfamilies of Schubert and Grothendieck polynomials obtained by restricting to $n$-Grassmannian permutations recover the Schur and symmetric Grothendieck polynomials in variables $x_1$ through $x_n$ respectively, with the latter representing structure sheaves of Schubert varieties in the K-theory of the Grassmannian \cite{Bu02}.
Analogously, if we specialize the forest polynomials $\forestpoly{F}$ to the \emph{zigzag forests} $\zigzag{n}=\{Z\suchthat \qdes{Z}\subset \{n\}\}$, then we recover Gessel's \cite{Ges84} \emph{fundamental quasisymmetric polynomials} (see \cite{NST_1,NT24}). 
See Figure~\ref{fig:zigzag} for an example of a zigzag forest $Z\in\zigzag{4}$.
We are thus naturally led to consider grove polynomials indexed by forests in $\zigzag{n}$.

\begin{figure}[!ht]
    \includegraphics[scale=0.8]{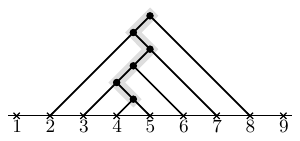}
    \caption{The zigzag forest in $\zigzag{4}$ corresponding to $\alpha=(2,3,1)$}
    \label{fig:zigzag}
\end{figure}

 The \emph{multi-fundamental} quasisymmetric functions $\widetilde{L}_\alpha$, where  $\alpha=(\alpha_1,\ldots,\alpha_t)$ and the $\alpha_i$ are positive integers, were introduced by Lam and Pylyavskyy~\cite[p.15]{LaPy07}. 
 Explicitly,
\begin{align}
\label{eq:multi_fundamental}
\widetilde{L}_\alpha(x_1,x_2,\ldots)=\sum_{(S_1,\ldots,S_k)}\prod_{j=1}^k\prod_{i\in S_j}{x_i}
\end{align}
where $k=\alpha_1+\cdots +\alpha_t$, the $S_i$ are finite subsets of $\NN$ such that $\max S_i\leq \min S_{i+1}$ for all $i<k$, and $\max S_i<\min S_{i+1}$ when $i$ belongs to $\{\alpha_1,\alpha_1+\alpha_2,\ldots,\alpha_1+\cdots+\alpha_{t-1}\}$. 
We are primarily interested in truncating these functions to finitely many variables.
\begin{prop}
    Fix $n\geq 1$. The family of polynomials $\{\grovepoly{Z}\suchthat Z\in \zigzag{n}\}$ coincides with the family $\{\widetilde{L}_\alpha(x_1,x_2,\ldots,x_n)\suchthat \alpha=(\alpha_1,\ldots,\alpha_t)\text{ with }t\leq n\}$.
\end{prop}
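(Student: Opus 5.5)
The plan is a direct bijective comparison of the combinatorial definitions. I take \Cref{defn:setvalued} with the same normalization as \eqref{eq:multi_fundamental}: either set $\beta=1$, or, for general $\beta$, multiply \eqref{eq:multi_fundamental} by $\beta^{|\kappa|-k}$, which is harmless because the bijection below preserves total size.

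\emph{Step 1: describe $\zigzag{n}$ explicitly.} A forest $Z$ with $\qdes{Z}\subset\{n\}$ has at most one terminal node whose left child is a leaf $i\ne n$. I expect this to force $Z$ to be either empty or a single nontrivial tree of the following shape. Its internal nodes form one path $v_1,\dots,v_k$ from the root down, where each $v_{j+1}$ is a child of $v_j$. Every child that is not on this path is a leaf, and the bottom node $v_k$ has leaf children $n,n+1$. Reading the path from the root, I record whether each step goes to a left or to a right child. The maximal runs of consecutive nodes joined by left steps then give a composition $\alpha=(\alpha_1,\dots,\alpha_t)$ of $k$, where $j$ is a run endpoint exactly when $v_{j+1}=(v_j)_R$. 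The leaves hanging off the path are forced: the leaf labels are consecutive, and the leftmost one is $\rho(\text{root})$. The number of right steps is $t-1$. Each right step hangs a leaf to the left of $n$, so the leftmost leaf is $n-(t-1)$, and the condition $n-t+1\ge 1$ is exactly $t\le n$. I will verify this picture as in \cite{NST_1,NT24}, where zigzag forests are matched with compositions $\alpha$ having at most $n$ parts. This gives a bijection $Z\leftrightarrow\alpha$.

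\emph{Step 2: match the fillings.} A compatible set-valued labeling gives sets $S_j=\kappa(v_j)$. The conditions along the path are exactly $\max S_j\le\min S_{j+1}$ for left steps and $\max S_j<\min S_{j+1}$ for right steps, which match the weak and strict descent positions in \eqref{eq:multi_fundamental}. The remaining conditions compare $\kappa(v_j)$ with its leaf children, using \Cref{rem:leaf_convention}. Whenever a node on the path has a leaf child $\ell$ hanging off the path, that leaf is $\ell=n-r$ if it is a left child and $\ell=n-r+1$ if it is a right child, where $r$ is the number of right steps below $v_j$. I must check that these leaf constraints are implied by the chain together with the single constraint $\max S_k<n+1$, i.e.\ $S_k\subset[n]$. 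Indeed, each right step below $v_j$ forces the maximum to drop by at least one as we go up, so $\max S_j\le n-r$. This gives $\max S_j\le n-r=\ell$ for a left leaf and $\max S_j<n-r+1=\ell$ for a right leaf. The bottom node also has $\max S_k\le n$ from its left leaf child $n$.

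\emph{Step 3: conclude.} The compatibility conditions are therefore exactly the conditions defining the terms of $\widetilde{L}_\alpha(x_1,\dots,x_n)$, since setting $x_j=0$ for $j>n$ is the same as requiring all $S_j\subset[n]$. The weights agree, so $\grovepoly{Z}=\widetilde{L}_\alpha(x_1,\dots,x_n)$.

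The main obstacle is Step 1 together with the leaf bookkeeping in Step 2. I need to pin down the exact shape of zigzag trees, including the indexing convention for where the leaves sit, and show that the hanging-leaf inequalities are redundant. I would handle this by induction on $k$, peeling off the root, which is the node $v_1$.
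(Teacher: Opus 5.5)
Your proposal is correct and follows the same route as the paper: read $\alpha$ off the successive left branches of the zigzag tree, set $S_j$ to be the label of the $j$th node on the path, and observe that the only leaf condition that matters is $\max S_k\le n$, which is exactly the truncation to $n$ variables. One small slip: for $j<k$, a leaf hanging off $v_j$ as a right child lies to the right of leaf $n+1$, so its label is at least $n+2$, not $n-r+1$ (your formula holds only at the bottom node $v_k$); this only weakens the constraint $\max S_j<\ell$, so your redundancy argument goes through unchanged.
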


\begin{proof}
Given a zigzag tree $Z$ of size $k$, let $\alpha_1$ be the number of vertices in the left branch coming from and including the root. 
Cut this branch, repeat the process, and let $\alpha_2,\alpha_3,\ldots$ denote the number of vertices in the successive left branches. 
Define $\alpha=(\alpha_1,\alpha_2,\ldots,\alpha_t)$. Notice that $t\leq n$ otherwise we cannot have $Z\in\zigzag{n}$.
Then
    \[\widetilde{L}_\alpha(x_1,x_2,\ldots,x_n)=\grovepoly{Z}.\]
Indeed, to match up with the definition in~\eqref{eq:multi_fundamental}, assign the label $S_i$ to the $i$th node starting from the root (starting at $1$). 
This constructs a compatible labeling of $Z$, while the extra leaf condition $\max S_k\leq n$ corresponds to the truncation to $n$ variables in $\widetilde{L}_\alpha$. 
\end{proof}

\begin{thm}\label{th:multifundamentals_expansions}
    At $\beta=1$, the multi-fundamental polynomials $\{\grovepoly{Z}\suchthat Z\in \zigzag{n}\}$ form a basis of $\qsym{n}$. For any $f\in  \qsym{n}$ we have the decomposition
    $$f=\sum_{Z\in \zigzag{n}} (\ct \ktopebH{Z}f)\,\grovepoly{Z}.$$
\end{thm}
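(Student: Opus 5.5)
Our plan is to combine Theorem~\ref{th:grove_expansions} with a characterization of $\qsym{n}$ through the operators $\tope{i}$. Three facts will be used. First, each $\grovepoly{Z}$ with $Z\in\zigzag{n}$ is quasisymmetric in $x_1,\dots,x_n$. This is the preceding proposition: $\grovepoly{Z}$ is a truncated multi-fundamental $\widetilde{L}_\alpha(x_1,\ldots,x_n)$, and truncations of quasisymmetric functions are quasisymmetric. Second, by Theorem~\ref{th:grove_expansions} these polynomials are linearly independent. Third, we need the standard characterization from \cite{NST_1}: a polynomial $f\in\ZZ[x_1,\ldots,x_n]$ lies in $\qsym{n}$ if and only if $\tope{i}f=0$ for all $i\neq n$. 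Here $i>n$ is automatic, since $\rope{i}=\rope{i+1}=\idem$ on $\ZZ[x_1,\ldots,x_n]$ for $i>n$. So the content is $\tope{i}f=0$ for $1\le i<n$, i.e.\ $f(\dots,x_{i-1},0,x_i,\dots)=f(\dots,x_i,0,x_{i+1},\dots)$.

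For spanning, fix $f\in\qsym{n}$ and expand $f=\sum_F(\ct\ktopebH{F}f)\grovepoly{F}$ using Theorem~\ref{th:grove_expansions}. We must show $\ct\ktopebH{F}f=0$ whenever $F\notin\zigzag{n}$. Any such $F$ with a nonzero coefficient has $\qdes{F}\subset[n]$, as shown in the proof of Theorem~\ref{th:grove_expansions}, so some $i\in\qdes{F}$ satisfies $i<n$. Write $\ktopebH{F}=\ktopebH{F/i}\ktopeH{i}$, where $\ktopeH{i}\in\{\tope{i},(1+\beta x_i)\tope{i}\}$. In either case $\ktopeH{i}f$ has $\tope{i}f=0$ as a factor, so the coefficient vanishes. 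This gives $f=\sum_{Z\in\zigzag{n}}(\ct\ktopebH{Z}f)\grovepoly{Z}$, which is the stated decomposition.

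The main obstacle is the characterization $\qsym{n}=\bigcap_{i<n}\ker\tope{i}\cap\ZZ[x_1,\ldots,x_n]$, if it is not simply quoted from \cite{NST_1}. To prove it, take a monomial $x^a$ with nonzero coefficient and strip zeros from its exponent vector one at a time. Each condition $\rope{i}f=\rope{i+1}f$ lets us slide a zero entry of the exponent vector past its neighbor. The compressed exponent sequence then occurs with the same coefficient in every position pattern, which is exactly quasisymmetry. The converse is the same computation read backwards.

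Finally, uniqueness of the decomposition follows from the linear independence supplied by the extractors of Theorem~\ref{thm:groveextractors}. That theorem gives $\ct\ktopebH{Z}\grovepoly{Z'}=\delta_{Z,Z'}$, which pins down every coefficient.
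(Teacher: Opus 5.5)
Your proof is correct and matches the paper's argument: you expand $f\in\qsym{n}$ via Theorem~\ref{th:grove_expansions}, then kill the coefficient of any $F\notin\zigzag{n}$ by writing $\ktopebH{F}=\ktopebH{F/i}\ktopeH{i}$ with $i\in\qdes{F}\setminus\{n\}$ and using $\tope{i}f=0$. The only minor difference is that you obtain $\grovepoly{Z}\in\qsym{n}$ from the identification with the truncated $\widetilde{L}_\alpha$, whereas the paper deduces it from $\tope{i}\grovepoly{Z}=0$ for $i\neq n$ (Theorem~\ref{thm:GroveCharacterization}) together with the kernel characterization of $\qsym{n}$ in \cite{NST_1}.
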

\begin{proof}
First, note that $\tope{i}\grovepoly{Z}=0$ for $i\not\in \qdes{F}$ by Theorem~\ref{thm:GroveCharacterization}. This shows that $\grovepoly{Z}\in \qsym{n}$ \cite[Corollary 6.9]{NST_1}.

Now, given an arbitrary $f\in \qsym{n}$ it suffices to show that its grove expansion only contains $\grovepoly{Z}$ with $Z\in \qsym{n}$, which by Theorem~\ref{thm:groveextractors} can be rephrased equivalently as
$\ct \ktopeH{F} f=0$ for $F\not\in \zigzag{n}$. 
For such an $F$, there exists $i\ne n\in \qdes{F}$ by definition. Let $\epsilon=\delta_{\text{$i$ is a right child}}$. Then
$\ktopeH{F}f=\ktopeH{F/i}(1+\epsilon x_i)\tope{i}f=0$, and we conclude.
\end{proof}
\begin{eg}
    Let $f=x_{2}^{2} x_{3} + x_{1} x_{2} x_{3} + x_{1}^{2} x_{3} + x_{1}^{2} x_{2} \in \qsym{4}$.
    We obtain the following expansion of $f$ into multi-fundamental quasisymmetric polynomials:
    \[
    f=\grovepoly{2\cdot2\cdot 3} - \grovepoly{1\cdot2\cdot 2\cdot3}-2\,\grovepoly{1\cdot1\cdot2\cdot3}   
    \]
    We explain the coefficient of $\grovepoly{1\cdot1\cdot2\cdot3}$. 
    To this end we apply the operator $\ct\ktopeH{1\cdot1\cdot2\cdot3}$ to $f$:
    \[
        f\xrightarrow{\ktopeH{3}^R}
        x_{2}^{2} x_{3} + x_{1} x_{2} x_{3} + x_{1}^{2} x_{3} + x_{2}^{2} + x_{1} x_{2} + x_{1}^{2}
        \xrightarrow{\ktopeH{2}^R}
        x_{2}^{2} - x_{1}^{2} x_{2} + x_{1} x_{2} + x_{2} - x_{1}^{2} + x_{1}
        \xrightarrow{\ktopeH{1}^L}
        -2x_1
        \xrightarrow{\ktopeH{1}^L} 
        -2
    \]
    
\end{eg}

\begin{rem}
    A special case of the positivity of grove polynomials into forest polynomials mentioned in Remark~\ref{rem:necessary}  establishes that multi-fundamentals expand positively into fundamentals. This recovers \cite[Theorem 5.12]{LaPy07}.
\end{rem}

\bibliographystyle{plain}
\bibliography{Groves}

\end{document}